\date{June 22, 2012}
\newcommand{\Sh}{\ensuremath{\protect{S_{\st{}}}}}
\newcommand{\INT}{\ensuremath{{\usftext{int}}}}
\newtheorem{thm}{Theorem}
\newtheorem{cor}[thm]{Corollary}
\newtheorem{defi}[thm]{Definition}
\newtheorem{rem}[thm]{Remark}
\newtheorem{nota}[thm]{Notation}
\newtheorem{princ}[thm]{Principle}
\newtheorem{tempo*}[thm]{Template}
\newcommand\be{\begin{equation}}
\newcommand\ee{\end{equation}}
\newbox\gnBoxA
\newdimen\gnCornerHgt
\newdimen\gnArgHgt
\def\Godelnum #1{%
	\setbox\gnBoxA=\hbox{$#1$}%
	\gnArgHgt=\ht\gnBoxA%
	\ifnum \gnArgHgt<\gnCornerHgt
		\gnArgHgt=0pt%
	\else
		\advance \gnArgHgt by -\gnCornerHgt%
	\fi
	\raise\gnArgHgt\hbox{$\ulcorner$} \box\gnBoxA %
		\raise\gnArgHgt\hbox{$\urcorner$}}
\def\bdefi{\begin{defi}\rm}
\def\edefi{\end{defi}}
\def\bnota{\begin{nota}\rm}
\def\enota{\end{nota}}
\def\brem{\begin{rem}\rm}
\def\erem{\end{rem}}
\newcommand{\tup}{\underline}
\def\IST{\textup{\textsf{IST}}}
\def\DNR{\textup{DNR}}
\def\H{\textup{\textsf{H}}}
\def\RCA{\textup{\textsf{RCA}}}
\def\RCAo{\textup{\textsf{RCA}}_{0}^{\omega}}
\def\RCAO{\textup{\textsf{RCA}}_{0}^{\Lambda}}
\def\T{\mathcal{T}}
\def\bye{\end{document}}
\def\P{\textup{\textsf{P}}}
\def\N{{\mathbb  N}}
\def\({\textup{(}}
\def\){\textup{)}}
\def\st{\textup{st}}
\def\asa{\leftrightarrow}
\def\di{\rightarrow}
\def\ACA{\textup{\textsf{ACA}}}
\def\paai{\Pi_{1}^{0}\textup{-\textsf{TRANS}}}
\def\QFAC{\textup{\textsf{QF-AC}}}
\def\RS{{\mathfrak{RS}}}
\def\HGMP{\textup{\textsf{HGMP}}}
\def\HIP{\textup{\textsf{HIP}}}
\def\HIO{\textup{\textsf{HIO}}}
\def\MU{\textup{\textsf{MU}}}
\def\HAC{\textup{\textsf{HAC}}}
\def\INT{\textup{\textsf{int}}}
\def\DNR{\textup{\textsf{DNR}}}
\def\UDNR{\textup{\textsf{UDNR}}}
\numberwithin{equation}{section}
\numberwithin{thm}{section}
\def\NCS{\textup{\textsf{NCS}}}
\def\UNCS{\textup{\textsf{UNCS}}}
\def\AST{\textup{\textsf{AST}}}
\def\KPT{\textup{\textsf{KPT}}}
\def\UKPT{\textup{\textsf{UKPT}}}
\def\UOG{\textup{\textsf{U1G}}}
\def\UHYP{\textup{\textsf{UHYP}}}
\def\AMT{\textup{\textsf{AMT}}}
\def\QFAC{\textup{\textsf{QF-AC}}}
\def\SADS{\textup{\textsf{SADS}}}
\def\USADS{\textup{\textsf{USADS}}}
\def\PG{\Pi_{1}^{0}\textup{\textsf{G}}}
\def\UPG{\textup{\textsf{U}}\Pi_{1}^{0}\textup{\textsf{G}}}
\def\UDNR{\textup{\textsf{UDNR}}}
\begin{document}
\begin{frontmatter}

  \title{Refining the taming of the Reverse Mathematics zoo}

  \author{\fnms{Sam} 
    \snm{Sanders}
    \corref{}
    \ead[label=e1]{sasander@me.com}
    \ead[label=u1,url]{http://sasander.wix.com/academic}
  }
  \address{Department of Mathematics, Ghent University \\
    Krijgslaan 281, 9000 Gent, BELGIUM \\
    ~\\
    \emph{and}\\
    ~\\
    Munich Center for Mathematical Philosophy, LMU Munich, \\
    Geschwister-Scholl-Platz 1, 80539 Munich, GERMANY\\
    ~\\
    \printead{e1}\\
    \printead{u1} }%

  \runauthor{S.~Sanders}

\begin{abstract}
\emph{Reverse Mathematics} is a program in the foundations of mathematics.  It provides an elegant classification in which the majority of theorems of ordinary mathematics fall into \emph{only five} categories, based on the `Big Five' logical systems.  
Recently, a lot of effort has been directed towards finding \emph{exceptional} theorems, i.e.\ which fall \emph{outside} the Big Five.  
The so-called Reverse Mathematics zoo is a collection of such exceptional theorems (and their relations).    
It was shown in \cite{samzoo} that a number of \emph{uniform} versions of the zoo-theorems, i.e.\ where a functional computes the objects stated to exist, fall in the third Big Five category \emph{arithmetical comprehension}, inside Kohlenbach's {higher-order} Reverse Mathematics.  
In this paper, we extend and refine the results from \cite{samzoo}.  
In particular, we establish analogous results for recent additions to the Reverse Mathematics zoo, thus establishing that the latter disappear at the uniform level. 
Furthermore, we show that the aforementioned equivalences can be proved using only intuitionistic logic.  
Perhaps most surprisingly, these \emph{explicit} equivalences are extracted from \emph{nonstandard} equivalences in Nelson's \emph{internal set theory}, and we show that the nonstandard equivalence can be recovered from the explicit ones.  Finally, the following zoo-theorems are studied in this paper:  $\Pi^0_1\textsf{G}$ (existence of uniformly $\Pi^0_1$-generics), $\textsf{FIP}$ (finite intersection principle), \textsf{1-GEN} (existence of 1-generics), \textsf{OPT} (omitting partial types principle), \textsf{AMT} (atomic model theorem), \textsf{SADS} (stable ascending or descending sequence), \textsf{AST} (atomic model theorem with sub-enumerable types), \textsf{NCS} (existence of non-computable sets), and \textsf{KPT} (Kleene/Post theorem that there exist Turing incomparable sets).        
\end{abstract}

\begin{keyword}[class=AMS]
  \kwd[Primary ]{03B30} \kwd{03F35} \kwd[; Secondary ]{26E35}
\end{keyword}

\begin{keyword}
\kwd{higher-order Reverse Mathematics} \kwd{Reverse Mathematics zoo} \kwd{Nonstandard Analysis}
\end{keyword}

\end{frontmatter}

%
\maketitle
\thispagestyle{empty}

Accepted for publication in the \emph{Notre Dame Journal of Formal Logic} (2016).  
\section{Introduction: Reverse Mathematics and its zoo}\label{intro}
The subject of this paper is the \emph{Reverse Mathematics} classification in Kohlenbach's framework (\cite{kohlenbach2}) of uniform versions of principles from the \emph{Reverse Mathematics zoo} (\cite{damirzoo}), namely as equivalent to \emph{arithmetical comprehension}.  A number of theorems from the \emph{Reverse Mathematics zoo} have been classified in this way in \cite{samzoo}, and this paper continues \emph{and refines} this classification.     
We first discuss the aforementioned italicised notions in more detail.

\medskip

First of all, an overview of the foundational program Reverse Mathematics (RM for short), may be found in \cites{simpson2, simpson1}.  
Perhaps \emph{the} main conceptual result of RM is that the majority of theorems from \emph{ordinary mathematics}, i.e.\ about countable and separable objects, fall into \emph{only five} 
classes of which the associated logical systems have been christened `the Big Five' (See e.g.\ \cite{montahue}*{p.\ 432} and \cite{dslice}*{p.\ 69} for this point of view).  
Recently, considerable effort has been spent identifying theorems falling \emph{outside} of the Big Five systems.  
For an overview, exceptional theorems (and their relations) falling below the third Big Five system $\ACA_{0}$, are collected in Dzhafarov's so-called RM zoo (\cite{damirzoo}).   

\medskip

It was established in \cite{samzoo} that a number of exceptional principles inhabiting the RM zoo become \emph{non-exceptional at the uniform level}, namely that the uniform versions of RM zoo-principles are all equivalent to arithmetical comprehension, the aforementioned third Big Five system of RM. 
As an example of such a `uniform version', consider the principle \textsf{UDNR} from \cite{samzoo}*{\S3}.  
\be\label{UDNR2}\tag{\textsf{UDNR}}
(\exists \Psi^{1\di1})\big[(\forall A^{1})(\forall e^{0})(\Psi(A)(e)\ne \Phi_{e}^{A}(e))\big].
\ee
Clearly, $\UDNR$ is the uniform version of the zoo principle\footnote{We sometimes refer to inhabitants of the RM zoo as `theorems' and sometimes as `principles'.} $\DNR$, defined as:  
\be\label{DNR2}\tag{\textup{\DNR}}
(\forall A^{1})(\exists f^{1})(\forall e^{0})\big[f(e)\ne \Phi_{e}^{A}(e)\big].
\ee  
Now, the principle $\DNR$ was introduced in \cite{withgusto} and is strictly weaker than $\textsf{WWKL}$ (See \cite{compdnr}) where the latter principle sports \emph{a small number} of Reverse Mathematics equivalences (\cites{montahue, yuppie, yussie}), but is not counted as a `Big Five' system.  
The exceptional status of $\DNR$ notwithstanding, it was shown in \cite{samzoo}*{\S3} that $\UDNR\asa (\exists^{2})$, where the second principle is the functional version of arithmetical comprehension, the third Big Five system of RM, defined as follows:
\be\tag{$\exists^{2}$}
(\exists \varphi^{2})(\forall f^{1})\big(\varphi(f)=0\asa (\exists n)f(n)\ne 0   \big).  
\ee
In other words, \emph{the `exceptional' status of $\DNR$ disappears completely if we consider its uniform version $\UDNR$}.  
Furthermore, the proof of the equivalence $\UDNR\asa (\exists^{2})$ takes place in $\RCAo$ (See Section \ref{base}), 
the base theory of Kohlenbach's \emph{higher-order Reverse Mathematics}.  This system is a conservative extension of $\RCA_{0}$, the usual base theory of RM, for the second-order language.    

\medskip

More generally, a number of uniform zoo-principles are shown to be equivalent to arithmetical comprehension over $\RCAo$ in \cite{samzoo}.  
A general template for classifying (past and future) zoo-principles in the same way was also formulated in the latter.    
In Section \ref{CLASS}, we show that this template works for a number of new theorems from the RM zoo, and refine the associated results considerably, as discussed next.       

\medskip

The methodology by which the aforementioned equivalences are obtained, constitutes somewhat of a surprise:
In particular, the equivalences in this paper are formulated as theorems of Kohlenbach's base theory $\RCAo$ (See \cite{kohlenbach2} and Section~\ref{popo}), 
but are obtained by applying the algorithm $\RS$ (See Section~\ref{tempie}) to associated equivalences \emph{in Nonstandard Analysis}, in particular Nelson's \emph{internal set theory} (See \cite{wownelly} and Section \ref{fooker}).  
Besides providing a streamlined and uniform approach, the use of Nonstandard Analysis via $\RS$ also results in \emph{explicit\footnote{An implication $(\exists \Phi)A(\Phi)\di (\exists \Psi)B(\Psi)$ is \emph{explicit} if there is a term $t$ in the language such that additionally $(\forall \Phi)[A(\Phi)\di B(t(\Phi))]$, i.e.\ $\Psi$ can be explicitly defined in terms of $\Phi$.}} equivalences \emph{without extra effort}.  In particular, we shall just prove equivalences inside Nonstandard Analysis \emph{without paying any attention to effective content}, 
and extract the explicit equivalences using the algorithm $\RS$.   This new `computational aspect' of Nonstandard Analysis is perhaps the true surprise of our taming of the RM zoo.  

\medskip

The following zoo-theorems are studied in Section \ref{CLASS} in the aforementioned way:  $\Pi^0_1\textsf{G}$ (existence of uniformly $\Pi^0_1$-generics), $\textsf{FIP}$ (finite intersection principle), \textsf{1-GEN} (existence of 1-generics), \textsf{OPT} (omitting partial types principle), \textsf{AMT} (atomic model theorem), \textsf{SADS} (stable ascending or descending sequence), \textsf{AST} (atomic model theorem with sub-enumerable types), \textsf{NCS} (existence of non-computable sets), and \textsf{KPT} (Kleene/Post theorem that there exist Turing incomparable sets).

\medskip

Furthermore, we shall refine the results from \cite{samzoo} and Section~\ref{CLASS} of this paper as follows in Section~\ref{refine}: 
First of all, while all results sketched above are proved using classical logic, we show in Section~\ref{culdesac} that they also go through for intuitionistic logic.  
Secondly, we formulate in Section~\ref{herbie} a special kind of explicit equivalence, called \emph{Herbrandisation}, from which we can re-obtain the \emph{original} equivalence in Nonstandard Analysis.  In other words, the \emph{Herbrandisation} is `meta-equivalent' to the nonstandard implication from which it was extracted.     

\medskip

In conclusion, this paper continues and refines the `taming of the RM zoo' initiated in \cite{samzoo}, i.e.\ we establish the equivalence between uniform RM zoo principles and arithmetical comprehension using intuitionistic logic.  
Furthermore, thanks to a new computational aspect of Nonstandard Analysis, we obtain `for free' \emph{explicit}\footnote{An implication $(\exists \Phi)A(\Phi)\di (\exists \Psi)B(\Psi)$ is \emph{explicit} if there is a term $t$ in the language such that additionally $(\forall \Phi)[A(\Phi)\di B(t(\Phi))]$, i.e.\ $\Psi$ can be explicitly defined in terms of $\Phi$.}  equivalences (not involving Nonstandard Analysis) from (non-effective) equivalences in Nonstandard Analysis, \emph{and vice versa}.  

\section{About and around internal set theory}\label{base}
In this section, we introduce Nelson's \emph{internal set theory}, first introduced in \cite{wownelly}, and its fragment $\P$ from \cite{brie}.
We shall also introduce Kohlenbach's base theory $\RCAo$ from \cite{kohlenbach2}, and the system $\RCAO$, which is based on $\P$.  
These systems are also introduced in \cite{samzoo}*{\S2}, but we include their definitions for completeness.     
\subsection{Introduction: Internal set theory}\label{fooker}
In Nelson's \emph{syntactic} approach to Nonstandard Analysis (\cite{wownelly}), as opposed to Robinson's semantic one (\cite{robinson1}), a new predicate `st($x$)', read as `$x$ is standard' is added to the language of \textsf{ZFC}, the usual foundation of mathematics.  
The notations $(\forall^{\st}x)$ and $(\exists^{\st}y)$ are short for $(\forall x)(\st(x)\di \dots)$ and $(\exists y)(\st(y)\wedge \dots)$.  A formula is called \emph{internal} if it does not involve `st', and \emph{external} otherwise.   
The three external axioms \emph{Idealisation}, \emph{Standard Part}, and \emph{Transfer} govern the new predicate `st';  they are introduced in Definition \ref{frackck} below, where the superscript `fin' in \textsf{(I)} means that $x$ is finite, i.e.\ its number of elements are bounded by a natural number.  
\bdefi[External axioms of $\IST$]\label{frackck}
\begin{enumerate}
\item[\textsf{(I)}] $(\forall^{\st~\textup{fin}}x)(\exists y)(\forall z\in x)\varphi(z,y)\di (\exists y)(\forall^{\st}x)\varphi(x,y)$, for internal $\varphi$ with any (possibly nonstandard) parameters.  
\item[\textsf{(S)}] $(\forall^{\st} x)(\exists^{\st}y)(\forall^{\st}z)(z\in y\asa (z\in y\wedge \varphi(z)))$, for any formula $\varphi$.  
\item[\textsf{(T)}] $(\forall^{\st}t)\big[(\forall^{\st}x)\varphi(x, t)\di (\forall x)\varphi(x, t)\big]$, where $\varphi$ is internal and only has free variables $t, x$.  
\end{enumerate}
\edefi
The system \textsf{IST} is (the internal system) \textsf{ZFC} extended with the aforementioned external axioms.  
Furthermore, $\IST$ is a conservative extension of \textsf{ZFC} for the internal language, as proved in \cite{wownelly}.    

\medskip

In \cite{brie}, the authors study G\"odel's system $\textsf{T}$ extended with special cases of the external axioms of \textsf{IST}.  
In particular, they consider nonstandard extensions of the (internal) systems \textsf{E-HA}$^{\omega}$ and $\textsf{E-PA}^{\omega}$, respectively \emph{Heyting and Peano arithmetic in all finite types and the axiom of extensionality}.       
We refer to \cite{brie}*{\S2.1} for the exact details of these (mainstream in mathematical logic) systems.  
We do mention that in these systems of higher-order arithmetic, each variable $x^{\rho}$ comes equipped with a superscript denoting its type, which is however often implicit.  
As to the coding of multiple variables, the type $\rho^{*}$ is the type of finite sequences of type $\rho$, a notational device used in \cite{brie} and this paper.  Underlined variables $\underline{x}$ consist of multiple variables of (possibly) different type.  

\medskip

In the next section, we introduce the system $\P$ assuming familiarity with the higher-type framework of G\"odel's system $\textsf{T}$ (See e.g.\ \cite{brie}*{\S2.1} for the latter).    
\subsection{The system $\P$}\label{popo}
In this section, we introduce the system $\P$.  We first discuss some of the external axioms studied in \cite{brie}.  
First of all, Nelson's axiom \emph{Standard part} is weakened to $\HAC_{\INT}$ as follows:
\be\tag{$\HAC_{\INT}$}
(\forall^{\st}x^{\rho})(\exists^{\st}y^{\tau})\varphi(x, y)\di (\exists^{\st}F^{\rho\di \tau^{*}})(\forall^{\st}x^{\rho})(\exists y^{\tau}\in F(x))\varphi(x,y),
\ee
where $\varphi$ is any internal formula.  Note that $F$ only provides a \emph{finite sequence} of witnesses to $(\exists^{\st}y)$, explaining its name \emph{Herbrandized Axiom of Choice}.      
Secondly,  Nelson's axiom idealisation \textsf{I} appears in \cite{brie} as follows:  
\be\tag{\textsf{I}}
(\forall^{\st} x^{\sigma^{*}})(\exists y^{\tau} )(\forall z^{\sigma}\in x)\varphi(z,y)\di (\exists y^{\tau})(\forall^{\st} x^{\sigma})\varphi(x,y), 
\ee
where $\varphi$ is again an internal formula.  
Finally, as in \cite{brie}*{Def.\ 6.1}, we have the following definition.
\bdefi\label{debs}
The set $\T^{*}$ is defined as the collection of all the constants in the language of $\textsf{E-PA}^{\omega*}$.  
The system $ \textsf{E-PA}^{\omega*}_{\st} $ is defined as $ \textsf{E-PA}^{\omega{*}} + \T^{*}_{\st} + \textsf{IA}^{\st}$, where $\T^{*}_{\st}$
consists of the following axiom schemas.
\begin{enumerate}
\item The schema\footnote{The language of $\textsf{E-PA}_{\st}^{\omega*}$ contains a symbol $\st_{\sigma}$ for each finite type $\sigma$, but the subscript is always omitted.  Hence $\T^{*}_{\st}$ is an \emph{axiom schema} and not an axiom.\label{omit}} $\st(x)\wedge x=y\di\st(y)$,
\item The schema providing for each closed term $t\in \T^{*}$ the axiom $\st(t)$.
\item The schema $\st(f)\wedge \st(x)\di \st(f(x))$.
\end{enumerate}
The external induction axiom \textsf{IA}$^{\st}$ is as follows.  
\be\tag{\textsf{IA}$^{\st}$}
\Phi(0)\wedge(\forall^{\st}n^{0})(\Phi(n) \di\Phi(n+1))\di(\forall^{\st}n^{0})\Phi(n).     
\ee
\edefi
For the full system $\P\equiv \textsf{E-PA}^{\omega*}_{\st} +\HAC_{\INT} +\textsf{I}$, we have the following theorem.  
Here, the superscript `$S_{\st}$' is the syntactic translation defined in \cite{brie}*{Def.\ 7.1}.    
\begin{thm}\label{consresult}
Let $\Phi(\tup a)$ be a formula in the language of \textup{\textsf{E-PA}}$^{\omega*}_{\st}$ and suppose $\Phi(\tup a)^\Sh\equiv\forall^{\st} \tup x \, \exists^{\st} \tup y \, \varphi(\tup x, \tup y, \tup a)$. If $\Delta_{\INT}$ is a collection of internal formulas and
\be\label{antecedn}
\P + \Delta_{\INT} \vdash \Phi(\tup a), 
\ee
then one can extract from the proof a sequence of closed terms $t$ in $\mathcal{T}^{*}$ such that
\be\label{consequalty}
\textup{\textsf{E-PA}}^{\omega*} + \Delta_{\INT} \vdash\  \forall \tup x \, \exists \tup y\in \tup t(\tup x)\ \varphi(\tup x,\tup y, \tup a).
\ee
\end{thm}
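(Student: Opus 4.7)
The plan is to derive this theorem as a direct application of the soundness theorem for the $\Sh$-translation developed in \cite{brie}*{\S7}; indeed the case $\Delta_{\INT}=\emptyset$ is essentially \cite{brie}*{Thm.~7.7}, and the remaining work is to verify that the additional internal axioms $\Delta_{\INT}$ are carried harmlessly through the translation.

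First I would recall the two defining features of $(\cdot)^{\Sh}$: for every formula $\Phi$ the translation has the normal form $\forall^{\st}\underline{x}\,\exists^{\st}\underline{y}\,\varphi_{\Sh}(\underline{x},\underline{y})$ with $\varphi_{\Sh}$ internal, and on a purely internal formula $\psi$ the translation is provably equivalent to $\psi$ itself, with both quantifier lists empty. Next, I would traverse the axioms of $\P\equiv\textup{\textsf{E-PA}}^{\omega*}_{\st}+\HAC_{\INT}+\textsf{I}$ and check that the $\Sh$-translation of each is provable in $\textup{\textsf{E-PA}}^{\omega*}$, with explicit term witnesses drawn from the set $\T^{*}$ of closed terms of $\textup{\textsf{E-PA}}^{\omega*}$ fixed in Definition~\ref{debs}. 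This checking is exactly the content of the soundness argument given in \cite{brie}: for $\HAC_{\INT}$ and $\textsf{I}$ the required witnesses are built uniformly from the given data, while for the schemas of $\T^{*}_{\st}$ and $\textsf{IA}^{\st}$ they are trivial or follow from an application of ordinary induction.

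To incorporate the extra hypothesis $\Delta_{\INT}$, I would proceed by induction on the length of the derivation witnessing \eqref{antecedn}. Each application of a formula $\psi\in\Delta_{\INT}$ in this proof has $\psi^{\Sh}$ provably equivalent to $\psi$ itself, so the translation of such a step is simply an application of the same internal formula inside $\textup{\textsf{E-PA}}^{\omega*}+\Delta_{\INT}$, requiring no new witnesses. Collecting the closed terms extracted from translating the remaining steps then yields a tuple $\underline{t}$ of closed terms in $\T^{*}$ such that $\textup{\textsf{E-PA}}^{\omega*}+\Delta_{\INT}\vdash\forall\underline{x}\,\exists\underline{y}\in\underline{t}(\underline{x})\,\varphi(\underline{x},\underline{y},\underline{a})$, which is precisely \eqref{consequalty}.

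The main technical obstacle is the verification that the $\Sh$-translations of the non-elementary axioms of $\P$ --- in particular $\HAC_{\INT}$ and the idealisation schema $\textsf{I}$ --- are derivable in the internal system $\textup{\textsf{E-PA}}^{\omega*}$ with term witnesses uniform in the parameters; this is where the Herbrandised form of choice is essential, and where the $\sigma^{*}$-type mechanism providing finite sequences of witnesses comes into play. Since this delicate step is already carried out in \cite{brie}, the proof here reduces to invoking that result together with the routine bookkeeping described above to thread $\Delta_{\INT}$ through the translation and to confirm that all extracted witness terms remain inside $\T^{*}$.
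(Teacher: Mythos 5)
Your proposal is correct and takes essentially the same approach as the paper: the paper's proof is simply the one-line citation ``Immediate by \cite{brie}*{Theorem 7.7}.'' The only nuance worth noting is that \cite{brie}*{Theorem 7.7} is already stated relative to an arbitrary collection $\Delta_{\INT}$ of internal formulas, so the extra bookkeeping you describe to thread $\Delta_{\INT}$ through the translation is in fact already built into the cited soundness theorem rather than being a gap that needs filling here.
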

\begin{proof}
Immediate by \cite{brie}*{Theorem 7.7}.  
\end{proof}
It is important to note that the proof of the soundness theorem in \cite{brie}*{\S7} provides a \emph{term extraction algorithm} $\mathcal{A}$ to obtain the term $t$ from the theorem.  

\medskip

The following corollary is essential to our results.  We shall refer to formulas of the form $(\forall^{\st}\underline{x})(\exists^{\st}\underline{y})\psi(\underline{x},\underline{y}, \underline{a})$ for internal $\psi$ as (being in) \emph{the normal form}.    
\begin{cor}\label{consresultcor}
If for internal $\psi$ the formula $\Phi(\underline{a})\equiv(\forall^{\st}\underline{x})(\exists^{\st}\underline{y})\psi(\underline{x},\underline{y}, \underline{a})$ satisfies \eqref{antecedn}, then 
$(\forall \underline{x})(\exists \underline{y}\in t(\underline{x}))\psi(\underline{x},\underline{y},\underline{a})$ is proved in the corresponding formula \eqref{consequalty}.  
\end{cor}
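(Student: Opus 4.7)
The plan is to show that the normal form $(\forall^{\st}\underline{x})(\exists^{\st}\underline{y})\psi(\underline{x},\underline{y}, \underline{a})$ with $\psi$ internal is already (essentially) a fixed point of the translation $S_{\st}$ from \cite{brie}*{Def.\ 7.1}, so that Theorem \ref{consresult} applies verbatim with $\tup x,\tup y$ playing the role of the outer quantifiers and $\psi$ playing the role of $\varphi$.

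First, I would recall from \cite{brie}*{\S7} that for internal $\psi$ the translation satisfies $\psi^{S_{\st}} \equiv \psi$, i.e.\ internal formulas are unaffected. Next, I would push the two external quantifier blocks through the translation using the clauses of \cite{brie}*{Def.\ 7.1} for $\forall^{\st}$ and $\exists^{\st}$: these clauses are precisely what \emph{produces} the shape $\forall^{\st}\tup x\,\exists^{\st}\tup y\,\varphi$, and when the matrix $\psi$ is already internal no further normalisation takes place. Consequently,
\[
\Phi(\underline{a})^{S_{\st}} \;\equiv\; (\forall^{\st}\underline{x})(\exists^{\st}\underline{y})\,\psi(\underline{x},\underline{y},\underline{a}),
\]
which matches the hypothesis of Theorem \ref{consresult} with $\tup x, \tup y$ as the specified sequences and $\psi$ as the internal matrix.

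I would then invoke Theorem \ref{consresult} directly: from the derivation \eqref{antecedn} the soundness algorithm $\mathcal{A}$ mentioned after Theorem \ref{consresult} extracts closed terms $\underline{t}\in\T^{*}$ such that the conclusion \eqref{consequalty} becomes
\[
\textup{\textsf{E-PA}}^{\omega*} + \Delta_{\INT} \;\vdash\; (\forall \underline{x})(\exists \underline{y}\in \underline{t}(\underline{x}))\,\psi(\underline{x},\underline{y},\underline{a}),
\]
which is exactly the claim of the corollary.

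The only point requiring genuine care, and which I expect to be the main (minor) obstacle, is the verification that the $S_{\st}$-translation really does leave a formula already in normal form unchanged up to provable equivalence in $\textsf{E-PA}^{\omega*}_{\st}$. This amounts to tracking the clauses of \cite{brie}*{Def.\ 7.1} step by step through a block of homogeneous external quantifiers capped by an internal matrix; once that is done, the corollary is an immediate instance of Theorem \ref{consresult}.
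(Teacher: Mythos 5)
Your proof follows exactly the paper's route: reduce the corollary to showing that a formula already in normal form is a fixed point of the $S_{\st}$-translation, verify this by stepping through the clauses of \cite{brie}*{Def.\ 7.1}, and then invoke Theorem \ref{consresult} directly. The paper's proof says precisely this (deferring the clause-by-clause check to \cite{samzoo}*{\S2} and \cite{sambo}), so you have identified both the correct statement to prove and the same justification.
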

\begin{proof}
Clearly, if for $\psi$ and $\Phi$ as given we have $\Phi(\underline{a})^{S_{\st}}\equiv \Phi(\underline{a})$, then the corollary follows immediately from the theorem.  
A tedious but straightforward verification using the clauses (i)-(v) in \cite{brie}*{Def.\ 7.1} establishes that indeed $\Phi(\underline{a})^{S_{\st}}\equiv \Phi(\underline{a})$.  
This verification is performed in full detail in \cite{samzoo}*{\S2} and \cite{sambo}.  
\end{proof}
Finally, the previous theorems do not really depend on the presence of full Peano arithmetic.  
Indeed, let \textsf{E-PRA}$^{\omega}$ be the system defined in \cite{kohlenbach2}*{\S2} and let \textsf{E-PRA}$^{\omega*}$ be its extension with types for finite sequences as in \cite{brie}*{\S2}.  
\begin{cor}\label{consresultcor2}
The previous theorem and corollary go through for $\P$ replaced by $\P_{0}\equiv \textsf{\textup{E-PRA}}^{\omega*}+\T_{\st}^{*} +\HAC_{\INT} +\textsf{\textup{I}}$.  
\end{cor}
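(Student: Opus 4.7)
The plan is to verify that the proof of the soundness theorem \cite{brie}*{Theorem 7.7} — which underlies Theorem \ref{consresult} — goes through verbatim when the ambient internal theory is weakened from $\textsf{E-PA}^{\omega*}$ to $\textsf{E-PRA}^{\omega*}$, so that $\T^{*}$ now denotes the closed terms of the primitive recursive higher-type calculus extended with finite-sequence types as in \cite{brie}*{\S2}. The soundness proof proceeds by induction on derivations in $\P$, producing for each axiom and inference rule an explicit witnessing term in $\T^{*}$; the verification thus reduces to checking that each such witness can already be built using only primitive recursive functionals and that the correctness proofs do not invoke more induction than is available in $\textsf{E-PRA}^{\omega*}$.

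First I would inspect the external axioms individually: the $\Sh$-translations of $\HAC_{\INT}$, $\textsf{I}$, and the standardness schemas $\T^{*}_{\st}$ are realised by straightforward, essentially non-recursive terms (projections, pairing, and the trivial embedding), so these clauses transfer immediately to the weaker base. Secondly, the external induction axiom \textsf{IA}$^{\st}$ must be treated separately: after applying the clauses of the $\Sh$-translation, its realiser is produced by a primitive recursion whose correctness reduces to an internal induction which can be arranged to be quantifier-free (over a Herbrandised matrix), and this much induction is available in $\textsf{E-PRA}^{\omega*}$.

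The main potential obstacle lies in the clauses for implication and universal standardness quantification, where the term $t$ witnessing the conclusion is constructed by recursion from the witnesses of the premises. The key point is that the Herbrandised character of the translation — finite sequences of witnesses rather than single ones — means that only bounded combinations of primitive recursive functionals are needed, so no genuine arithmetical recursion enters the construction. Once this routine bookkeeping has been carried out, both Theorem \ref{consresult} and Corollary \ref{consresultcor} go through with $\P$ replaced by $\P_{0}$, yielding the desired term-extraction result over the weaker primitive recursive base.
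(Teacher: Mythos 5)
Your proposal is essentially correct and follows the same strategy as the paper: inspect the soundness proof of \cite{brie}*{Theorem 7.7} and confirm that it survives the weakening of the internal base. The paper's own proof is considerably terser, pinpointing the single crucial ingredient as the exponential function (needed to manipulate the finite-sequence types that the Herbrandised translation produces), and concluding that the argument goes through for \emph{any} fragment of $\textsf{E-PA}^{\omega*}$ containing $\textsf{I}\Delta_0 + \textsf{EXP}$; your clause-by-clause verification reaches the same conclusion from the complementary direction, checking that the extracted terms themselves are primitive recursive. Both observations are needed, so your account and the paper's are two sides of the same coin.

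One small misreading is worth flagging: you devote a step to showing that $\textsf{IA}^{\st}$ has a primitive recursive realiser, but $\P_0$ as defined in the corollary is $\textsf{E-PRA}^{\omega*}+\T^{*}_{\st}+\HAC_{\INT}+\textsf{I}$ and does \emph{not} include the external induction axiom $\textsf{IA}^{\st}$ (compare with Definition \ref{debs}, where $\textsf{IA}^{\st}$ is listed separately from $\T^{*}_{\st}$). So that part of your argument is vacuously unnecessary rather than wrong; the rest is sound.
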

\begin{proof}
The proof of \cite{brie}*{Theorem 7.7} goes through for any fragment of \textsf{E-PA}$^{\omega{*}}$ which includes \textsf{EFA}, sometimes also called $\textsf{I}\Delta_{0}+\textsf{EXP}$.  
In particular, the exponential function is (all what is) required to `easily' manipulate finite sequences.    
\end{proof}
Finally, we define $\RCAO$ as the system $\P_{0}+\QFAC^{1,0}$.  Recall that Kohlenbach defines $\RCAo$ in \cite{kohlenbach2}*{\S2} as \textsf{E-PRA}$^{\omega}+\QFAC^{1,0}$ 
where the latter is the axiom of choice limited to formulas $(\forall f^{1})(\exists n^{0})\varphi_{0}(f, n)$, $\varphi_{0}$ quantifier-free.     

\subsection{Notations and remarks}
We introduce some notations regarding $\RCAO$.  First of all, we shall mostly follow Nelson's notations as in \cite{bennosam}.  
\begin{rem}[Standardness]\label{notawin}\rm
As suggested above, we write $(\forall^{\st}x^{\tau})\Phi(x^{\tau})$ and also $(\exists^{\st}x^{\sigma})\Psi(x^{\sigma})$ as short for 
$(\forall x^{\tau})\big[\st(x^{\tau})\di \Phi(x^{\tau})\big]$ and $(\exists x^{\sigma})\big[\st(x^{\sigma})\wedge \Psi(x^{\sigma})\big]$.     
We also write $(\forall x^{0}\in \Omega)\Phi(x^{0})$ and $(\exists x^{0}\in \Omega)\Psi(x^{0})$ as short for 
$(\forall x^{0})\big[\neg\st(x^{0})\di \Phi(x^{0})\big]$ and $(\exists x^{0})\big[\neg\st(x^{0})\wedge \Psi(x^{0})\big]$.  Furthermore, if $\neg\st(x^{0})$ (resp.\ $\st(x^{0})$), we also say that $x^{0}$ is `infinite' (resp.\ `finite') and write `$x^{0}\in \Omega$'.  
Finally, a formula $A$ is `internal' if it does not involve `$\st$', and $A^{\st}$ is defined from $A$ by appending `st' to all quantifiers (except bounded number quantifiers).    
\end{rem}
Secondly, the notion of equality in $\RCAO$ is important to our enterprise.  
\begin{rem}[Equality]\label{equ}\rm
The system $\RCAo$ includes equality between natural numbers `$=_{0}$' as a primitive.  Equality `$=_{\tau}$' for type $\tau$-objects $x,y$ is defined as follows:
\be\label{aparth}
[x=_{\tau}y] \equiv (\forall z_{1}^{\tau_{1}}\dots z_{k}^{\tau_{k}})[xz_{1}\dots z_{k}=_{0}yz_{1}\dots z_{k}]
\ee
if the type $\tau$ is composed as $\tau\equiv(\tau_{1}\di \dots\di \tau_{k}\di 0)$.
In the spirit of Nonstandard Analysis, we define `approximate equality $\approx_{\tau}$' as follows:
\be\label{aparth2}
[x\approx_{\tau}y] \equiv (\forall^{\st} z_{1}^{\tau_{1}}\dots z_{k}^{\tau_{k}})[xz_{1}\dots z_{k}=_{0}yz_{1}\dots z_{k}]
\ee
with the type $\tau$ as above.  
Furthermore, the system $\RCAo$ includes the axiom of extensionality as follows:
\be\label{EXT}\tag{\textsf{E}}  
(\forall \varphi^{\rho\di \tau})(\forall  x^{\rho},y^{\rho}) \big[x=_{\rho} y \di \varphi(x)=_{\tau}\varphi(y)   \big].
\ee
However, as noted in \cite{brie}*{p.\ 1973}, the axiom of \emph{standard extensionality} \eqref{EXT}$^{\st}$ cannot be included in the system $\P$ (and hence $\RCAO$).  
Finally, a functional $\Xi^{2}$ is called an \emph{extensionality functional} for $\varphi^{1\di 1}$ if 
\be\label{turki}
(\forall k^{0}, f^{1}, g^{1})\big[ \overline{f}\Xi(f,g, k)=_{0}\overline{g}\Xi(f,g,k) \di \overline{\varphi(f)}k=_{0}\overline{\varphi(g)}k \big].  
\ee
In other words, $\Xi$ witnesses \eqref{EXT} for $\Phi$.  As will become clear in Section \ref{tempie}, standard extensionality is translated by our algorithm $\RS$ into the existence of an extensionality functional, and the latter amounts to merely an unbounded search.   
\end{rem}

\subsection{General template}\label{tempie}  
In this secton, we formulate a general template for obtaining explicit equivalences between arithmetical comprehension and uniform versions of principles from the RM zoo.  
This template was first formulated in \cite{samzoo} and will be applied to a number of new members of the RM zoo in Section \ref{CLASS}; it will be refined to systems of intuitionistic logic in Section \ref{culdesac}.    

\medskip

First of all, the notion of explicit implication is defined as follows.  
\bdefi[Explicit implication]\label{himplicit}
An implication $(\exists \Phi)A(\Phi)\di (\exists \Psi)B(\Psi)$ is \emph{explicit} if there is a term $t$ in the language such that additionally $(\forall \Phi)[A(\Phi)\di B(t(\Phi))]$, i.e.\ $\Psi$ can be explicitly defined in terms of $\Phi$.  
\edefi
Given that an extensionality functional as defined in Remark \ref{equ} amounts to nothing more than an unbounded search, an implication as in the previous definition will still be called `explicit' if the term $t$ additionally involves an extensionality functional $\Xi$ for $\Phi$ as defined in \eqref{turki}.  

\medskip

Secondly, we need the following functional version of arithmetical comprehension, called \emph{Feferman's non-constructive search operator} (See e.g.\ \cite{avi2}*{\S8.2}):
\be\tag{$\mu^{2}$}
(\exists^{2}\mu)(\forall f^{1})\big( (\exists n^{0})f(n)=0 \di f(\mu(f))=0 \big), 
\ee
equivalent to $(\exists^{2})$ over $\RCAo$ by \cite{kohlenbach2}*{Prop.\ 3.9}.  
We also require the following special case of the $\textsf{IST}$ axiom \emph{Transfer}.  
\be\tag{$\paai$}
(\forall^{\st}f^{1})\big( (\exists n^{0})f(n)=0 \di (\exists^{\st} m^{0})f(m)=0  \big).  
\ee
Thirdly, with these definitions in place, our template is formulated as follows.
\begin{tempo*}\rm
Let $T\equiv (\forall X^{1})(\exists Y^{1})\varphi(X,Y)$ be a  RM zoo principle and let $UT$ be $(\exists \Phi^{1\di 1})(\forall X^{1})\varphi(X,\Phi(X))$.  
To prove the \emph{explicit} implication $ UT\di (\mu^{2})$, execute the following steps:   
\begin{enumerate}
\item[(i)] Let $UT^{+}$ be $(\exists^{\st} \Phi^{1\di 1})(\forall^{\st} X^{1})\varphi(X,\Phi(X))$ where the functional $\Phi$ is additionally standard extensional.  We work in $\RCA_{0}^{\Lambda}+UT^{+}$.
\item[(ii)] Suppose the standard function $h^{1}$ is such that $(\forall^{\st}n)h(n)=0$ and $(\exists m)h(m)\ne0$, i.e.\ $h$ is a counterexample to $\paai$.  
\item[(iii)] For standard $V^{1}$, use $h$ to define standard $W^{1}\approx_{1} V$ such that $\Phi(W)\not\approx_{1}\Phi(V)$, i.e.\ $W$ is $V$ with the nonstandard elements changed sufficiently to yield a different image under $\Phi$.  \label{itemf}
\item[(iv)] The previous contradiction implies that $\RCA_{0}^{\Lambda}$ proves $UT^{+}\di \paai$.\label{forkiiii}
\item[(v)] Bring the implication from the previous step into the normal form\\ $(\forall^{\st}x)(\exists^{\st}y)\psi(x,y)$ ($\psi$ internal) using the algorithm $\mathcal{B}$ from Remark \ref{algob}.  \label{frink}
\item[(vi)] Apply the term extraction algorithm $\mathcal{A}$ using Corollary \ref{consresultcor2}.  The resulting term yields the explicit implication $UT\di (\mu^{2})$.  \label{frink2}
\end{enumerate}  
The explicit implication $(\mu^{2})\di UT$ is usually straightforward;  alternatively, establish $\paai\di UT^{+}$ in $\RCA_{0}^{\Lambda}$ and apply steps \eqref{frink} and \eqref{frink2}.  
\end{tempo*}
The algorithm $\RS$ is defined as the steps \eqref{frink} and \eqref{frink2} in the template, i.e.\ the application of the algorithms $\mathcal{B}$ and $\mathcal{A}$ to suitable implications.  

\medskip

By way of example, the following theorem was established in \cite{samzoo}*{\S3}, where $\UDNR(\Psi)$ and $\MU(\mu)$ are $\UDNR$ and $(\mu^{2})$ without the leading existential quantifier.   
\begin{thm}\label{sef}
From the proof of $\UDNR^{+}\asa \paai$ in $\RCAO $, two terms $s, u$ can be extracted such that $\RCAo$ proves:
\be\label{frood}
(\forall \mu^{2})\big[\textsf{\MU}(\mu)\di \UDNR(s(\mu)) \big] \wedge (\forall \Psi^{1\di 1})\big[ \UDNR(\Psi)\di  \MU(u(\Psi, \Phi))  \big],
\ee
where $\Phi$ is an extensionality functional for $\Psi$.
\end{thm}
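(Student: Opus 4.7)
The plan is to execute the Template from Section~\ref{tempie} for the specific case $T\equiv\DNR$, producing the promised terms via the term extraction guaranteed by Corollary~\ref{consresultcor2}.

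First, I would establish the nonstandard equivalence $\UDNR^{+}\asa\paai$ in $\RCAO$. For the forward direction $\UDNR^{+}\to\paai$, suppose by way of contradiction that some standard $h^{1}$ witnesses the failure of $\paai$, so $(\forall^{\st}n)h(n)=0$ while $h(N)\ne 0$ for some infinite $N$. Given an arbitrary standard oracle $V^{1}$, I would use $h$ to define a standard $W^{1}$ that agrees with $V$ on every standard argument but encodes the counterexample behavior at infinite arguments, so that $W\approx_{1}V$ holds. Standard extensionality of $\Phi$ would then force $\Phi(W)\approx_{1}\Phi(V)$, while one exploits the diagonal inequality $\Phi(A)(e)\ne\Phi_{e}^{A}(e)$ together with the possibility of coding $N$ (and hence the discrepancy between $V$ and $W$) into the execution of $\Phi_{e}^{\cdot}(e)$ at a suitable standard index $e$, yielding a standard coordinate on which $\Phi(W)$ and $\Phi(V)$ disagree. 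This is the contradiction. For the reverse direction $\paai\to\UDNR^{+}$, transfer gives Feferman's search operator $\mu$ on standard inputs, and one sets $\Phi(A)(e):=\Phi_{e}^{A}(e)+1$ when the computation halts (decided via $\mu$) and $0$ otherwise; extensionality on standard arguments follows from the canonical use of $\mu$.

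Second, I would recast both implications into the normal form $(\forall^{\st}\underline{x})(\exists^{\st}\underline{y})\psi(\underline{x},\underline{y})$ using the algorithm $\mathcal{B}$ from step~(v) of the Template. The standard extensionality hypothesis on $\Phi$ unfolds into the demand for an extensionality functional $\Xi$ satisfying~\eqref{turki}, and this is precisely what will account for the second argument of $u(\Psi,\Phi)$ in the conclusion~\eqref{frood}.

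Third, invoking Corollary~\ref{consresultcor2} on the two normal-form implications yields closed terms of $\mathcal{T}^{*}$ witnessing the $\exists^{\st}$-quantifiers. Reading off these witnesses produces $s$ in the first conjunct of~\eqref{frood} (turning a search operator $\mu$ into a functional satisfying $\UDNR$) and $u$ in the second conjunct (turning $\Psi$ together with its extensionality functional $\Phi$ into a search operator). The purely internal implications delivered by~\eqref{consequalty} are exactly the explicit statements claimed, provable in $\RCAo$.

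The hard part will be step~(iii): constructing, from the counterexample $h$ and an arbitrary standard $V$, the standard perturbation $W\approx_{1}V$ whose image under $\Phi$ differs from $\Phi(V)$ at a standard coordinate. All of the genuine content lives in this diagonal-style coding; once it is in place, steps~(iv)--(vi) are a mechanical application of the algorithms $\mathcal{B}$ and $\mathcal{A}$ to the established nonstandard equivalence, and deliver the terms $s$ and $u$.
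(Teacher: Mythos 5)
Your proposal follows the paper's Template (Section~\ref{tempie}) and mirrors the proofs given for the other zoo-theorems in Section~\ref{CLASS}, especially $\UNCS$, which is the closest cousin of $\UDNR$; this is the right reading, since the paper states Theorem~\ref{sef} only by citation to \cite{samzoo} as an instance of the template. Two points worth tightening. First, for $\paai\to\UDNR^{+}$, the transfer axiom does not by itself hand you a standard search functional $\mu$: the functional $\Phi(A)(e):=\varphi^{A}_{e}(e)+1$ involves an unbounded search and so is not a standard object of $\RCAO$ just because $\paai$ holds. What one actually does is note that $\paai$ gives $(\forall^{\st}A,e)(\exists^{\st}m)[\,\varphi^{A}_{e}(e)\!\downarrow\,\to\,\varphi^{A}_{e,m}(e)\!\downarrow\,]$ and then applies $\HAC_{\INT}$ (built into $\RCAO$) to obtain a \emph{standard} functional $F$ bounding the halting stages; the standard $\Phi$ of $\UDNR^{+}$ is then the term built from $F$ that runs $\varphi^{A}_{e}(e)$ for $\max F(A,e)$ steps. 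Second, the diagonal coding you postpone in step~(iii) is, following the $\UNCS$ model, concretely the following: take $A_{0}$ the zero oracle, fix a standard index $e_{1}$ for the machine that scans its oracle for the first nonzero entry and outputs it, and set $A(n):=\Psi(A_{0})(e_{1})$ if $(\exists i\leq n)h(i)\ne0$ and $A(n):=0$ otherwise. Then $A\approx_{1}A_{0}$, standard extensionality gives $\Psi(A)(e_{1})=\Psi(A_{0})(e_{1})$, while $\varphi^{A}_{e_{1},s_{0}}(e_{1})=\Psi(A_{0})(e_{1})$ for sufficiently large (infinite) $s_{0}$, contradicting the $\UDNR$ clause $\Psi(A)(e_{1})\ne\varphi^{A}_{e_{1}}(e_{1})$. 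With these two details supplied, your steps~(v)--(vi) (normal-forming via $\mathcal{B}$ and term extraction via $\mathcal{A}$, i.e.\ Corollary~\ref{consresultcor2}) are exactly the intended application of $\RS$, producing the terms $s$ and $u$ of~\eqref{frood}.
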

From this theorem, we may conclude that $\RCAo$ proves $\UDNR\asa (\mu^{2})$, and that this equivalence is `explicit' as in Definition \ref{himplicit}.  

\medskip

Finally, the above template treats zoo-principles in a kind of `$\Pi_{2}^{1}$-normal form', for the simple reason that most zoo-principles are formulated in such a way.  
Nonetheless, it is a natural question, discussed in \cite{samzoo}*{\S6}, whether principles \emph{not} formulated in this normal form gives rise to uniform principles not equivalent to $(\mu^{2})$.    
Surprisingly, the answer to this question turned out to be negative.

\section{Classifying the RM zoo}\label{CLASS}
In this section, we apply the template from Section \ref{tempie} to a number of new theorems from the RM zoo.  
In each case, we show that the uniform version of the RM zoo principle is (explicitly) equivalent to arithmetical comprehension.  
\subsection{Universal genericity}
In this section, we study the principle $\PG$ from \cite{AMT}*{\S4} and \cite{dslice}*{Def.\ 9.44}, which is the statement that \emph{for every collection of uniformly $\Pi_{1}^{0}$ dense predicates on $2^{<\N}$, there is a sequence in $2^{\N}$ meeting all predicates}.  Like in \cite{brie}, we use the notation $\sigma^{0^{*}}\leq_{0^{*}}1$ to denote that $\sigma$ is a finite binary sequence.   
\begin{princ}[$\PG$]
Define $D_{i}(\sigma)\equiv \varphi(i, \sigma)$ with $\varphi\in \Pi_{1}^{0}$.  We have  
\[
(\forall i^{0})(\forall \tau^{0^{*}}\leq_{0^{*}}1)(\exists \sigma^{0^{*}}\succeq \tau)D_{i}(\sigma)\di (\exists G^{1}\leq_{1}1)(\forall i^{0})(\exists \sigma^{0} \prec G)D_{i}(\sigma).
\]
\end{princ}
The `fully' uniform version of $\PG$ is then defined as follows.  Note the function $g^{1}$ which realises the antecedent of $\PG$ and the function $\Phi(f,g)(2)$ which realises the numerical quantifier in the consequent of $\PG$.  
\begin{princ}[$\UPG$]
Define $D_{i}^{f}(\sigma)\equiv (\forall k^{0})f(k,i, \sigma)\ne 0$. There is $\Phi^{(1\times 1)\di (1\times 1)}$ such that for all $f^{1}, g^{1}$  
\begin{align}
(\forall i^{0})(\forall \tau^{0^{*}}\leq_{0^{*}}1)\big[ g(i,\tau)&\succeq \tau \wedge D^{f}_{i}\big(g(i,\tau)\big)\big]\label{frok}\\
&\di (\forall i^{0})\big[ \Phi(f, g)(2)(i) \prec \Phi(f,g)(1)) \wedge D^{f}_{i}\big(\Phi(f,g)(2)(i)\big)\big].\notag
\end{align}
\end{princ}
\begin{thm}\label{plugi}
In $\RCAo$, we have $\UPG\asa (\mu^{2})$, and this equivalence is explicit.  
\end{thm}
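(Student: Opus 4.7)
The plan is to apply the template from Section~\ref{tempie}. Define $\UPG^{+}$ as $\UPG$ with the outer $(\exists \Phi)$ replaced by $(\exists^{\st}\Phi)$ and the universal quantifier on $f,g$ replaced by $(\forall^{\st})$, together with the additional requirement that $\Phi$ be standard-extensional. I will prove the nonstandard equivalence $\UPG^{+} \ASA \paai$ in $\RCAO$, and then apply the algorithm $\RS$ (normalisation via $\mathcal{B}$ followed by term extraction via $\mathcal{A}$ from Corollary~\ref{consresultcor2}) to obtain the explicit equivalence $\UPG \asa (\mu^{2})$ in $\RCAo$, in the style of Theorem~\ref{sef}.

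For the direction $\paai \to \UPG^{+}$, define $\Phi(f,g)$ by primitive recursion iterating $g$ from the empty sequence: set $\tau_{0}:=\langle\rangle$ and $\tau_{i+1}:=g(i,\tau_{i})$; then put $\Phi(f,g)(2)(i):=\tau_{i+1}$, and let $\Phi(f,g)(1)$ be the type-$1$ binary function determined by the nested chain $(\tau_{i})$ (padded with zeros on indices not yet covered). When the antecedent~\eqref{frok} holds --- an internal statement, hence valid for all $i$, standard or not --- each $D_{i}^{f}(\tau_{i+1})$ holds and $\tau_{i+1}\prec \Phi(f,g)(1)$, yielding the consequent. Since $\Phi$ is defined by a closed term of $\mathcal{T}^{*}$ it is standard; its standard extensionality follows from $\textsf{IA}^{\st}$, which shows that for standard $i$ the iterate $\tau_{i}$ depends only on the values of $g$ at standard arguments.

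For the harder direction $\UPG^{+}\to \paai$, assume $\Phi$ witnesses $\UPG^{+}$ and, towards a contradiction, that $h^{1}$ is a standard counterexample to $\paai$, so $(\forall^{\st}n)\,h(n)=0$ while $h(m)\neq 0$ for some nonstandard $m$. The strategy is to construct, using $h$, standard pairs $V=(f_{V},g_{V})$ and $W=(f_{W},g_{W})$ both satisfying the antecedent of $\UPG$, with $V\approx_{1} W$ (automatic, since they will be arranged to differ only at nonstandard arguments, where $h$ is nonzero), yet such that any functional satisfying the internal consequent of $\UPG$ for both $V$ and $W$ must output values that disagree on some standard index. This exploits that the consequent of $\UPG$ is internal, so $\Phi$ is forced to produce a witness at every index --- including the nonstandard $m$ --- while the threading constraint $\Phi(V)(2)(i)\prec \Phi(V)(1)$ bundles all of these witnesses onto a single branch, so that a nonstandard-only difference in $g$ is propagated back to a standardly-visible difference in $\Phi(V)(1)$. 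Standard extensionality of $\Phi$ then gives the contradiction, forcing $\paai$.

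The principal obstacle is the concrete design of $V$ and $W$ in step three: one must choose the $\Pi^{0}_{1}$ predicates $D_{i}^{f}$ so that internal density is preserved, $V$ and $W$ agree on standard inputs, and the generic branch is compelled to encode a standardly-detectable consequence of $h$'s nonstandard zero. Once $\UPG^{+}\ASA\paai$ is secured in $\RCAO$, both implications are brought into the normal form $(\forall^{\st}\underline{x})(\exists^{\st}\underline{y})\psi(\underline{x},\underline{y})$ (with $\psi$ internal) via the algorithm $\mathcal{B}$, after which the term-extraction algorithm $\mathcal{A}$ of Corollary~\ref{consresultcor2} produces explicit closed terms $s,u\in \mathcal{T}^{*}$ such that $\RCAo$ proves $(\forall \mu^{2})\big[\MU(\mu)\di \UPG(s(\mu))\big]\wedge (\forall \Phi)\big[\UPG(\Phi)\di \MU(u(\Phi,\Xi))\big]$, where $\Xi$ is an extensionality functional for $\Phi$ as in Theorem~\ref{sef}.
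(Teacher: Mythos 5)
Your plan correctly identifies the template and its general shape, but the central mathematical content is missing or flawed in both directions.

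\textbf{The forward direction $\UPG^{+}\di \paai$ is not actually carried out.} You describe the strategy — use $h$ to build standard pairs that agree at standard arguments but force $\Phi$ into a standardly-visible contradiction — and then write that ``the principal obstacle is the concrete design of $V$ and $W$ in step three.'' That obstacle \emph{is} the proof. The paper's argument hinges on two explicit modifications: a $g_{0}$ that diverts off $\Phi(f,g)(1)$ exactly when the input $\tau$ has a nonstandardly witnessed zero of $h$ below its length (so the antecedent of $\UPG$ survives), and an $f_{0}$ that is set to $0$ on inputs $(k,i,\tau)$ with $\tau$ a prefix of $\Phi(f,g)(1)$ once $k$ is nonstandardly large (so $D_{i}^{f_{0}}$ fails on standard prefixes of $\Phi(f,g)(1)$). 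The contradiction is then that $\Phi(f_{0},g_{0})(2)(i)$ is a \emph{standard} initial segment of $\Phi(f_{0},g_{0})(1)\approx_{1}\Phi(f,g)(1)$, forcing the second case of $f_{0}$ at nonstandard $k$, contradicting $D_{i}^{f_{0}}$. Your description — that $\Phi(V)$ and $\Phi(W)$ are forced to ``disagree on some standard index'' — is not how the argument closes: the contradiction comes from $\Phi$'s output violating the \emph{modified} density condition itself, not from a disagreement between two images. Without the explicit $f_{0},g_{0}$ there is no proof.

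\textbf{The reverse direction has a concrete error.} You define $\tau_{0}=\langle\rangle$, $\tau_{i+1}=g(i,\tau_{i})$ and let $\Phi(f,g)(1)$ be ``the binary function determined by the chain, padded with zeros.'' But the antecedent of $\UPG$ only gives $g(i,\tau)\succeq\tau$, so the chain may stall and then jump; the naive padding then fails to satisfy $\tau_{i+1}\prec\Phi(f,g)(1)$ (e.g.\ $\tau_{1}=\langle\rangle$, $\tau_{2}=\langle 1\rangle$ gives $G(0)=0\neq\tau_{2}(0)$). Repairing this requires an unbounded search for the first $k$ with $|\tau_{k}|>n$, which is \emph{not} a closed term of $\mathcal{T}^{*}$; it is precisely where $(\mu^{2})$ (correspondingly, $\paai$) enters. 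More decisively: your sketch invokes neither $\paai$ nor $(\mu^{2})$, so if it were correct it would establish $\UPG^{+}$ outright in $\RCAO$, and combined with the (intended) implication $\UPG^{+}\di\paai$ one would conclude $\RCAO\vdash\paai$ — impossible, since $\RCAO$ is conservative over $\textsf{E-PRA}^{\omega*}$ while $\paai$ extracts $(\mu^{2})$. So the claim that your $\Phi$ is a closed term of $\mathcal{T}^{*}$ and standard-extensional via $\textsf{IA}^{\st}$ alone cannot stand; the construction must genuinely use the hypothesis $\paai$.

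The final paragraph — normalising $\UPG^{+}\ASA\paai$ via algorithm $\mathcal{B}$ and extracting terms via Corollary~\ref{consresultcor2} — matches the paper and is correct in outline, but it rests on the nonstandard equivalence that is not actually established here.
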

\begin{proof}
The reverse implication is immediate as $\ACA_{0}$ implies $\UPG$ and $(\mu^{2})$ easily (and explicitly) yields $\UPG$ as all relevant notions are arithmetical.  
We will now apply the template from Section \ref{tempie} to obtain the explicit implication $\UPG\di (\mu^{2})$.  

\medskip

Working in $\RCAO +\UPG^{+}$, suppose $\neg\paai$, i.e.\ there is a function $h$ such that $(\forall^{\st}n^{0})h(n)=0\wedge (\exists m^{0})h(m)\ne 0$.        
Recall from Section \ref{tempie} that $\UPG^{+}$ expresses that $\UPG$ holds, and the functional $\Phi$ in the latter is standard and standard extensional.  
Now let $f^{1},g^{1}$ be standard functions such that the antecedent of \eqref{frok} holds.  Define the standard function $g_{0}$ as follows:  
\be\label{dorga}
g_{0}(i, \tau):=
\begin{cases}
g(i,\tau*\langle k\rangle) & \tau \prec \Phi(f, g)(1)\wedge (\exists n\leq |\tau|)h(n)\ne0~\wedge  \\
~&  k\leq 1 \textup{ is the least such that } \tau*\langle k\rangle \not\prec \Phi(f, g)(1) \\
g(i, \tau ) & \text{otherwise}
\end{cases}
\ee
By the definition of $g_{0}$, we still have $(\forall i^{0})(\forall \tau^{0^{*}}\leq_{0^{*}}1)\big[ g_{0}(i,\tau)\succeq \tau \wedge D^{f}_{i}\big(g_{0}(i,\tau)\big)\big]$.     
Furthermore define the standard function $f_{0}$ as follows:
\[
f_{0}(k,i, \tau):=
\begin{cases}
f(k, i,\tau) & (\forall n\leq \max(|\tau|, i, k))(h(n)= 0) \vee \tau \not\prec \Phi(f, g)(1) \\
0 & \text{otherwise}
\end{cases}
\]
Intuitively speaking, $f_{0}$ is just $f$ with (long enough) initial segments of $\Phi(f, g)(1)$ mapping to zero.  
Nonetheless, by the definition of $f_{0}, g_{0}$, we still have 
\[
(\forall i^{0})(\forall \tau^{0}\leq_{0}1)\big[ g_{0}(i,\tau)\succeq \tau \wedge D^{f_{0}}_{i}\big(g_{0}(i,\tau)\big)\big],  
\]
as the modification to $g$ in \eqref{dorga} is such that `too long' initial segments of $\Phi(f,g)(1)$ are never output by $g_{0}$.  
Since $f\approx_{1} f_{0}$ and $g\approx_{1} g_{0}$, standard extensionality implies: 
\be\label{EX5}
\Phi(f, g)\approx_{1\times 1}\Phi(f, g_{0})\approx_{1\times 1}\Phi(f_{0}, g)\approx_{1\times 1}\Phi(f_{0}, g_{0}).
\ee
Applying $\UPG$ for $f_{0}, g_{0}$, we obtain for any $i$:
\be\label{gendgama}
\Phi(f_{0}, g_{0})(2)(i) \prec \Phi(f_{0},g_{0})(1)) \wedge D^{f_{0}}_{i}\big(\Phi(f_{0},g_{0})(2)(i)\big),
\ee
and by standard extensionality \eqref{EX5}, we have $\Phi(f_{0}, g_{0})(2)(i) =_{0}\Phi(f, g)(2)(i)$ and also $ \Phi(f, g)(1)\approx_{1} \Phi(f_{0}, g_{0})(1)$ for standard $i$.  
However, now consider the second conjunct of \eqref{gendgama}, which is $(\forall k^{0})f_{0}(k,i,\Phi(f_{0}, g_{0})(2)(i))\ne 0$.  For large enough $k$ and \emph{standard} $i$, we are in the \emph{second case} of the definition of $f_{0}$ as 
$\Phi(f_{0}, g_{0})(2)(i)\prec \Phi(f_{0}, g_{0})(1)\approx_{1} \Phi(f,g)(1)$, by standard extensionality, the first conjunct of \eqref{gendgama}, and the fact that $\Phi(f_{0}, g_{0})(2)(i)$ is standard.  
However, the second conjunct of \eqref{gendgama} contradicts the second case of $f_{0}$, and this contradiction implies $\paai$.  

\medskip

Hence, we have established $\UPG^{+}\di\paai $ inside $\RCAO$.  We now bring the former implication into normal form.  First of all, note that $\paai$ implies 
\be\label{curk}
(\forall^{\st}f^{1})(\exists^{\st}m^{0})\big[ (\exists n^{0})f(n)\ne0 \di (\exists i\leq m)f(i)\ne0  \big], 
\ee
which is a normal form, and where $C(f, m)$ is the formula in square brackets in \eqref{curk}.  Furthermore, $\UPG^{+}$ has the form
\be\label{lobia}
(\exists^{\st}\Phi)\big[(\forall^{\st}f^{1}, g^{1})A(f,g, \Phi)\wedge \Phi \textup{ is standard extensional}\big], 
\ee
where $A(f,g, \Phi)$ is exactly \eqref{frok}.  The second conjunct of \eqref{lobia} is:
\[
(\forall^{\st}f^{1}, g^{1}, u^{1}, v^{1})\big(u\approx_{1}v\wedge f\approx_{1} g\di \Phi(f,g)\approx_{1\times 1}\Phi(u,v)\big).
\]
Resolving all instances of `$\approx_{\rho}$', we obtain that for all standard $f^{1}, g^{1}, u^{1}, v^{1}$:
\[
(\forall^{\st}N^{0})(\overline{u}N=_{0}\overline{v}N\wedge \overline{f}N=_{0} \overline{g}N)\di (\forall i\leq 1)(\forall^{\st}k^{0})(\overline{\Phi(f,g)(i)}k=_{0}\overline{\Phi(u,v)(i)}k).
\]
Bringing all standard quantifiers outside, we obtain 
\be\label{finkal}
(\forall^{\st}f^{1}, g^{1}, u^{1}, v^{1}, k^{0}, i^{0}\leq 1)(\exists^{\st}N^{0})B(f,g,u,v,k,i,N, \Phi), 
\ee
where $B$ is the formula
\be\label{tokamak}
(\overline{u}N=_{0}\overline{v}N\wedge \overline{f}N=_{0} \overline{g}N)\di (\overline{\Phi(f,g)(i)}k=_{0}\overline{\Phi(u,v)(i)}k).
\ee
Combining all the previous, $\UPG^{+}\di\paai $ implies that 
\begin{align*}
\big[ (\exists^{\st}\Phi)\big[(\forall^{\st}h^{1}, g^{1})A(h,g, \Phi)\wedge &  (\forall^{\st}Z^{1})(\exists^{\st}N^{0})B(Z,N, \Phi) \big]\di (\forall^{\st}f^{1})(\exists^{\st}m^{0})C(f,m), 
\end{align*}
where $Z^{1}$ codes all the variables in the leading quantifier of \eqref{finkal}.
This yields
\begin{align*}
(\forall^{\st}\Phi, \Xi)\Big[ \big[(\forall^{\st}h^{1}, g^{1})A(h,g, \Phi)\wedge &  (\forall^{\st}Z^{1})B(Z, \Xi(Z), \Phi) \big]\di (\forall^{\st}f^{1})(\exists^{\st}m^{0})C(f,m)\Big], 
\end{align*}
and dropping some `st' and bringing all standard quantifiers to the front:
\be\label{structure}
(\forall^{\st}\Phi, \Xi, f)(\exists^{\st}m^{0})\Big[ \big[(\forall h^{1}, g^{1})A(h,g, \Phi)\wedge   (\forall Z^{1})B(Z, \Xi(Z), \Phi) \big]\di C(f,m)\Big], 
\ee
which is a normal form provable in $\RCAO$.  Applying Corollary \ref{consresultcor2}, there is a term $t$ such that $\RCAo$ proves 
\begin{align*}
(\forall \Phi, \Xi, f)(\exists m^{0}\in t(\Phi, \Xi, f))\Big[ \big[(\forall h^{1}, g^{1})A(h,g, \Phi)\wedge &  (\forall Z^{1})B(Z, \Xi(Z), \Phi) \big]\di C(f,m)\Big], 
\end{align*}
where $\Phi$ is as in $\UPG$ by $(\forall h, g)A(h,g, \Phi)$ and $\Xi$ is the associated extensionality functional by $(\forall Z^{1})B(Z, \Xi(Z), \Phi)$.  
Now define $s(\Phi, \Xi, f)$ as the maximum of all $t(\Phi, \Xi, f)$ for $i<|t(\Phi, \Xi, f)|$ and note that $(\forall f^{1})C(f, s(\Phi,\Xi, f))$ expresses that $s(\Phi,\Xi, f)$ is Feferman's non-constructive search operator.  In other words, we have obtained the explicit implication $\UPG\di (\mu^{2})$, and we are done.  
\end{proof}
We immediately obtain the following `more explicit' corollary, where $\UPG(\Phi)$ is just $\UPG$ with the leading existential quantifier omitted.  
\begin{cor}\label{sef2}
From the proof of $\UPG^{+}\asa \paai$ in $\RCAO $, two terms $s, u$ can be extracted such that $\RCAo$ proves:
\be\label{frood2}
(\forall \mu^{2})\big[\textsf{\MU}(\mu)\di \UPG(s(\mu)) \big] \wedge (\forall \Phi)\big[ \UPG(\Phi)\di  \MU(u(\Phi, \Xi))  \big],
\ee
where $\Xi$ is an extensionality functional for $\Phi$.
\end{cor}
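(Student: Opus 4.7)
The plan is to apply the term extraction machinery from Section~\ref{tempie} (in particular steps \eqref{frink} and \eqref{frink2} of the Template, i.e.\ the algorithm $\RS$) separately to each direction of the nonstandard equivalence $\UPG^{+}\asa \paai$ established inside $\RCAO$ during the proof of Theorem~\ref{plugi}. Since both directions are proved in $\RCAO$, Corollary~\ref{consresultcor2} guarantees the existence of the desired closed terms; the work is to put each implication in the normal form $(\forall^{\st}\underline{x})(\exists^{\st}\underline{y})\psi(\underline{x},\underline{y})$ so that the algorithm $\mathcal{A}$ applies.

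For the implication $\UPG^{+}\di\paai$, the normal form computation has already been carried out inside the proof of Theorem~\ref{plugi}, culminating in \eqref{structure}. Applying Corollary~\ref{consresultcor2} yields a term $t$ bounding the witness $m$ in terms of $\Phi$, its extensionality functional $\Xi$, and the input $f$. Setting $u(\Phi,\Xi)$ to be the function $f\mapsto \max_{i<|t(\Phi,\Xi,f)|} t(\Phi,\Xi,f)(i)$ as in the final paragraph of Theorem~\ref{plugi}, one checks that $(\forall f^{1})C(f,u(\Phi,\Xi)(f))$ expresses precisely $\MU(u(\Phi,\Xi))$, giving the second conjunct of \eqref{frood2}.

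For the reverse implication $\paai\di\UPG^{+}$, the argument is essentially immediate inside $\RCAO$: given $\mu$ as in $\MU(\mu)$, one uses $\mu$ to decide each $\Pi_{1}^{0}$ predicate $D_{i}^{f}$ on any finite binary sequence, and then defines $\Phi(f,g)$ by recursion on $i$ exactly as in the classical construction of a $\Pi_{1}^{0}$-generic; standard extensionality of the resulting $\Phi$ is witnessed by an unbounded search that is readily available once $\mu$ is given. Bringing this implication to normal form requires only routine manipulations (pulling out standard quantifiers and resolving the occurrences of $\approx_{1}$ in `$\Phi$ is standard extensional', as was done between \eqref{lobia} and \eqref{finkal}). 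Applying Corollary~\ref{consresultcor2} then produces the term $s$ such that $\MU(\mu)\di \UPG(s(\mu))$, yielding the first conjunct of \eqref{frood2}.

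The main obstacle is bookkeeping rather than any deep new idea: one must be careful that the `standard extensional' clause in $\UPG^{+}$ gets resolved into the existence of an extensionality functional $\Xi$ before the normal form is produced, so that $\Xi$ appears as a free parameter in the extracted term $u$ (rather than being swallowed into an implicit $\st$-quantifier that would prevent Corollary~\ref{consresultcor2} from applying). Once the implications are in the shape of \eqref{structure}, both applications of the algorithm $\mathcal{A}$ are routine, and the corollary follows.
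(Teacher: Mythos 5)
Your proposal follows essentially the same route as the paper's proof: the second conjunct of \eqref{frood2} is read off directly from the term extraction already performed at the end of the proof of Theorem~\ref{plugi}, and the first conjunct is obtained by noting that $\paai\di\UPG^{+}$ is nearly trivial in $\RCAO$, normalising it, and applying Corollary~\ref{consresultcor2}. One small slip worth flagging: in sketching the direction $\paai\di\UPG^{+}$ you write ``given $\mu$ as in $\MU(\mu)$''---but inside $\RCAO$ you only have the nonstandard principle $\paai$ (not the operator $\mu$); what you actually use is Transfer to decide $\Sigma^{0}_{1}$ formulas with standard parameters, which plays the role of $\mu$ on standard input, and it is only after term extraction that $\mu$ appears as the extracted parameter of $s$.
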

\begin{proof}
The second conjunct is immediate from the theorem.  The first conjunct can be obtained by establishing $\paai\di \UPG^{+}$ (which is almost trivial) in $\RCAO$ and applying Corollary \ref{consresultcor2} to this implication in normal form.  
\end{proof}
The proof of the theorem also provides a template as follows.  
\begin{rem}[Algorithm $\mathcal{B}$]\label{algob}\rm
Let $T\equiv (\forall X^{1})(\exists Y^{1})\varphi(X, Y)$ be an internal formula and define the `strong' uniform version $UT^{+}$ as 
\[
(\exists^{\st} \Phi^{1\di 1})\big[(\forall^{\st} X^{1})\varphi(X,\Phi(X)) \wedge \textup{$\Phi$ is standard extensional}\big].
\]
The proof of Theorem \ref{plugi} provides a \emph{normal form algorithm} $\mathcal{B}$ to convert the implication $UT^{+}\di \paai$ into a normal form $(\forall^{\st}x)(\exists^{\st}y)\varphi(x, y)$ as in \eqref{structure}. 
\end{rem}
The previous theorem implies that we may extract an explicit equivalence from a nonstandard one.  It is then a natural question (especially in the light of Reverse Mathematics) if we can 
also re-obtain the (proof of the) nonstandard equivalence from the (proof of the) explicit equivalence.  This question will be answered in the positive in Section \ref{herbie}.

\subsection{The finite intersection principle and 1-genericity}
In this section, we study uniform versions of the finite intersection principle \textsf{FIP} from \cite{damu} and the principle 1-\textsf{GEN} related to Cohen forcing from \cite{igusa}.  
By \cite{igusa}*{Theorem 5.8}, the aforementioned principles are equivalent over $\RCA_{0}$.  

\medskip

First of all, to study \textsf{1-GEN} in the higher-order framework, we define $\sigma^{0}\in S_{f}^{X}$ as $ (\exists \tau^{0})f(\sigma, \tau, \overline{X}|\tau|)=0$ 
and let 1-$\textsf{GEN}$ and its uniform version be as follows.
\begin{princ}[1-\textsf{GEN}]
\[
(\forall X^{1})(\exists Y^{1})(\forall f^{1})\big[(\exists n^{0})(\overline{Y}n\in S_{f}^{X})\vee (\exists m^{0})(\forall \sigma\succeq \overline{Y}m)(\sigma \not\in S_{f}^{X})  \big].
\]  
\end{princ}
\begin{princ}[\textsf{U1G}]
There is $ \Phi^{1\di (1\times 2\times 2)}$ such that for all $X^{1}, f^{1}$, we have
\be\label{fras}
\big(\overline{\Phi(X)(1)}\Phi(X)(2)(f)\in S_{f}^{X}\big)\vee (\forall \sigma\succeq \overline{\Phi(X)(1)}\Phi(X)(3)(f))(\sigma \not\in S_{f}^{X}).
\ee
\end{princ}
Note that the witnessing functional in the first disjunct is actually superfluous, as the base theory includes $\QFAC^{1,0}$.  
We have the following theorem.  
\begin{thm}
In $\RCAo$, we have $\UOG\asa (\mu^{2})$, and this equivalence is explicit.  
\end{thm}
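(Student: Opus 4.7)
The plan is to apply the template from Section~\ref{tempie} in exact parallel with the proof of Theorem~\ref{plugi}. The reverse direction $(\mu^{2})\to\UOG$ is immediate and explicit: given $\mu$, arithmetical comprehension decides the $\Sigma_{1}^{0}$ and $\Pi_{1}^{0}$ clauses defining $S_{f}^{X}$ and the two disjuncts of \eqref{fras}, so a functional $\Phi$ realizing $\UOG$ can be written directly as an explicit term in $\mu$.

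For the forward direction, I work in $\RCAO+\UOG^{+}$, where $\UOG^{+}$ asserts that $\UOG$ holds with a standard, standard-extensional $\Phi$. Suppose $\neg\paai$ witnessed by a standard $h^{1}$ with $(\forall^{\st}n)h(n)=0$ and $(\exists m)h(m)\neq 0$. Given standard $X^{1},f^{1}$, invoke $\UOG^{+}$ to obtain $Y=\Phi(X)(1)$, $n=\Phi(X)(2)(f)$, and $m=\Phi(X)(3)(f)$ satisfying \eqref{fras}. Mimicking the construction of $f_{0},g_{0}$ in the proof of Theorem~\ref{plugi}, I use $h$ to build standard perturbations $X_{0}\approx_{1}X$ and $f_{0}\approx_{1}f$ that coincide with $X,f$ on all standard arguments but are altered at certain nonstandard positions (triggered by the first index where $h$ is nonzero, and coordinated with the standard sequence $\overline{Y}n$). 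Standard extensionality of $\Phi$ ensures that $\Phi(X_{0})$ agrees with $\Phi(X)$ on standard inputs, so $Y,n,m$ are unchanged. The perturbation is engineered so that in the first case of \eqref{fras} for $(X_{0},f_{0})$, any witness $\tau$ for $\overline{Y}n\in S_{f_{0}}^{X_{0}}$ is forced to be nonstandard, contradicting that $n$ and the output of $\Phi$ remain standard; while in the second case, one exhibits via $h$ a specific nonstandard $\sigma\succeq\overline{Y}m$ that provably lies in $S_{f_{0}}^{X_{0}}$. Either branch contradicts \eqref{fras} for $(X_{0},f_{0})$, yielding $\paai$.

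Having established $\UOG^{+}\to\paai$ in $\RCAO$, I apply algorithm $\mathcal{B}$ from Remark~\ref{algob} to convert this implication to a normal form $(\forall^{\st}\Phi,\Xi,f)(\exists^{\st}m^{0})\Psi(\Phi,\Xi,f,m)$, where $\Xi$ is an extensionality functional for $\Phi$ as in \eqref{turki}. Corollary~\ref{consresultcor2} then extracts a closed term $t$ yielding finitely many candidates for $m$; taking $s(\Phi,\Xi,f):=\max_{i<|t(\Phi,\Xi,f)|}t(\Phi,\Xi,f)(i)$ exhibits Feferman's $\mu^{2}$ as an explicit term in $\Phi$ and $\Xi$, proving the explicit implication $\UOG\to(\mu^{2})$. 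The symmetric explicit term realizing $\UOG$ from $\mu$ is obtained by running the template on the (trivial) implication $\paai\to\UOG^{+}$.

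The main obstacle is step~(iii): whereas the consequent of $\UPG$ in Theorem~\ref{plugi} is a conjunction (so defeating a single clause suffices), the consequent \eqref{fras} of $\UOG$ is a \emph{disjunction}, meaning the perturbation $X_{0},f_{0}$ must simultaneously sabotage both disjuncts while respecting $X_{0}\approx_{1}X$ and $f_{0}\approx_{1}f$. Should the direct engineering prove too delicate, a clean fallback is to first extract, in $\RCAo$, an explicit uniformisation of the $\RCA_{0}$-equivalence $\textup{1-\textsf{GEN}}\leftrightarrow\PG$ of \cite{igusa}*{Theorem~5.8}, yielding $\UOG\asa\UPG$ explicitly, and then invoke Theorem~\ref{plugi} to conclude.
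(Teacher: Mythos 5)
The reverse direction and the general template steps (normal form, algorithm~$\mathcal B$, term extraction via Corollary~\ref{consresultcor2}) are fine and match the paper. The problem is the core of the forward direction: you have not actually produced a working perturbation, and the heuristic you sketch for the first disjunct does not go through. You say the perturbation ``forces any witness $\tau$ for $\overline Y n\in S_{f_0}^{X_0}$ to be nonstandard, contradicting that $n$ and the output of $\Phi$ remain standard.'' But nothing in $\UOG$ or in $S_f^X$ requires the witness $\tau$ to be standard; a nonstandard witness $\tau$ with $f(\sigma,\tau,\overline X|\tau|)=0$ is a perfectly good witness to $\sigma\in S_f^X$, and there is no contradiction. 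So the ``sabotage both disjuncts simultaneously'' plan is not just delicate --- as formulated it fails. Your fallback (extract an explicit uniformisation of Igusa's $\RCA_0$-equivalence $\textup{1-\textsf{GEN}}\leftrightarrow\PG$) is in principle sound, but you present it as a citation when in fact it would require redoing the entire higher-order uniformisation and extensionality analysis for that equivalence, which is substantive work not in the paper.

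The paper sidesteps the obstacle you flagged by \emph{not} trying to sabotage both disjuncts by perturbation. Instead, it first \emph{chooses} standard $f_0,X_0$ for which the first disjunct of \eqref{fras} is already false (e.g.\ $f_0\equiv 1$ makes $S_{f_0}^{X_0}=\emptyset$), and then perturbs only $f_0$ to $f_1$ by setting $f_1(\sigma,\tau,\rho)=0$ once $(\exists n\le|\sigma|)h(n)\ne0$. Since $f_1\approx_1 f_0$, standard extensionality forces $\Phi(X_0)(2)(f_1)=\Phi(X_0)(2)(f_0)$, and because the relevant $\sigma_1=\overline{\Phi(X_0)(1)}\Phi(X_0)(2)(f_1)$ is \emph{standard}, $f_1$ and $f_0$ agree at $\sigma_1$, so the first disjunct is still false for $(X_0,f_1)$. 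Hence the second disjunct must hold for $(X_0,f_1)$: no $\sigma\succeq\overline{\Phi(X_0)(1)}\Phi(X_0)(3)(f_1)$ is in $S_{f_1}^{X_0}$. But $\Phi(X_0)(3)(f_1)$ is standard, so one may take $\sigma=\overline{\Phi(X_0)(1)}M$ with $M$ nonstandard; then $|\sigma|$ is infinite, $f_1(\sigma,\cdot,\cdot)\equiv 0$, so $\sigma\in S_{f_1}^{X_0}$ --- contradiction, yielding $\paai$. Note the asymmetry: the first disjunct is preserved false by extensionality (no perturbation argument needed there), and the contradiction comes entirely from the second disjunct at nonstandard length. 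You should replace your vague two-pronged perturbation with this one-sided argument.
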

\begin{proof}
The reverse implication is immediate as $\ACA_{0}$ implies \textsf{1-GEN} and $(\mu^{2})$ easily (and explicitly) yields $\UOG$ in light of e.g.\ \cite{dohi}*{2.24.2}.  
We now prove the remaining explicit implication using the template from Section \ref{tempie}.  
Thus, working in $\RCAO+\UOG^{+}$, suppose $\neg\paai$, i.e.\ there is a function $h$ such that $(\forall^{\st}n^{0})h(n)=0\wedge (\exists m^{0})h(m)\ne 0$.  
Let $f_{0}$ and $X_{0}$ be standard sequences such that the first conjunct of \eqref{fras} is false, and define the standard function $f_{1}$ as:
\[
f_{1}(\sigma,\tau, \rho):=
\begin{cases}
f_{0}(\sigma,\tau,\rho) &\textup{otherwise} \\
0 & (\exists n \leq |\sigma|)(h(n)\ne 0)
\end{cases}.
\]
With this definition, we observe that
\[
\sigma_{1}:=\overline{\Phi(X_{0})(1)}\Phi(X_{0})(2)(f_{1})=_{0}\overline{\Phi(X_{0})(1)}\Phi(X_{0})(2)(f_{0})=: \sigma_{0},
\]
by standard extensionality, implying the following sequence of equivalences:
\begin{align*}
[\sigma_{1}\in S_{f_{1}}^{X_{0}}]&\equiv[ (\exists \tau^{0})f_{1}(\sigma_{1}, \tau, \overline{X}|\tau|)=0]\\
&\equiv[ (\exists \tau^{0})f_{0}(\sigma_{1}, \tau, \overline{X}|\tau|)=0]\equiv [ (\exists \tau^{0})f_{0}(\sigma_{0}, \tau, \overline{X}|\tau|)=0]\equiv [\sigma_{0}\in S_{f_{0}}^{X_{0}}],
\end{align*}
where  the second step holds by the definition of $f_{1}$ and the fact that $\sigma_{1}$ is standard.    
In other words, the first conjunct of \eqref{fras} is false for $X_{0}, f_{1}$.  Hence, the second conjunct of \eqref{fras} must hold for $X_{0}$ and for $ f_{1}$, i.e.\ we have 
\[
(\forall \sigma\succeq \overline{\Phi(X_{0})(1)}\Phi(X_{0})(3)(f_{1}))(\forall \tau^{0})f_{1}(\sigma, \tau, \overline{X_{0}}|\tau|)\ne 0.    
\]
Since $\overline{\Phi(X_{0})(1)}\Phi(X_{0})(3)(f_{1})$ is standard, we can apply the previous for $\sigma=\overline{\Phi(X_{0})(1)}M$ for any nonstandard $M$.  However, this yields a contradiction as $f_{1}$ is zero for long enough $\sigma$.  From this contradiction, we conclude that $\RCAO$ proves $ \UOG^{+}\di \paai$.  
Analogous to the proof of Theorem \ref{plugi}, $\UOG^{+}\di \paai$ may be brought into a normal form of the form \eqref{structure}, and applying Corollary \ref{consresultcor2} now finishes the proof.  
\end{proof}
Secondly, we briefly study the principle \textsf{FIP} in the following remark. 
\begin{rem}\rm
By \cite{damu}*{Prop.\ 2.3}, $\ACA_{0}$ is equivalent to a \emph{strengthened} version of \textsf{FIP} 
where a set $I$ is given such that $i\in I\asa A_{i}\in \mathfrak{B}$, where the latter is the maximal subfamily with the finite intersection property.  
It is straightforward to prove a uniform version (involving $(\mu^{2})$) of this equivalence.

\medskip

However, the uniform version of \textsf{FIP} will provide such a set $I$ as in the strengthened version of \textsf{FIP}.  In other words, the aforementioned results 
immediately imply that the uniform version of \textsf{FIP} is equivalent to $(\mu^{2})$.  Similarly, \cite{damu}*{Prop.~2.3} implies that the uniform versions of $n\textsf{IP}$ ($n\geq 2$) are all equivalent to $(\mu^{2})$.      
\end{rem}

\subsection{The omitting partial types principle}
In this section, we study uniform versions of the \emph{Omitting Partial Types} principle \textsf{OPT} which may be found in \cite{AMT}*{\S5} and \cite{dslice}*{Def.\ 9.64}.   

\medskip

In light of \cite{AMT}*{Theorems 5.6-5.7} and particularly \cite{dslice}*{9.66-9.67}, the uniform versions of \textsf{OPT} and \textsf{HYP} are (explicitly) equivalent.  
Hence, we study the latter, which is essentially the 
statement that \emph{for every set $X^{1}$, there is a function which is not dominated by any $X$-computable function}.  In symbols, we have 
\be\tag{\textup{\textsf{HYP}}}
(\forall f^{1})(\exists g^{1})(\forall e^{0}, k^{0})(\exists n^{0}\geq k)(\forall m^{0}, s^{0})\big[\varphi_{e,s}^{f}(n)=m \di m < g(n) \big],     
\ee
following the definition in \cite{zweer}*{p.\ 189, 3.7}.  Hence, the uniform version is
\begin{align}\tag{\textup{\textsf{UHYP}}}
(\exists \Phi^{1\di (1\times 1)})(\forall f^{1})&(\forall e^{0}, k^{0}, m^{0}, s^{0})\big[\Phi(f)(2)(e,k)\geq k~\wedge\\
& \varphi_{e,s}^{f}(\Phi(f)(2)(e,k))=m \di m < \Phi(f)(1)\big(\Phi(f)(2)(e,k)\big) \big].     \notag
\end{align}
\begin{thm}
In $\RCAo$, we have $\UHYP\asa (\mu^{2})$, and the equivalence is explicit.  
\end{thm}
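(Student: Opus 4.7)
The plan is to apply the template from Section~\ref{tempie} to $\UHYP$. The reverse direction is routine: $(\mu^{2})$ is equivalent to $\ACA_{0}$ by \cite{kohlenbach2}*{Prop.\ 3.9}, $\ACA_{0}$ proves $\HYP$ (see \cite{zweer}), and from $\mu$ one explicitly defines a witnessing functional $\Phi$ for $\UHYP$ by arithmetical search over the Kleene $T$-predicate.

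For the forward direction, I would work in $\RCAO + \UHYP^{+}$ and, toward a contradiction, take a standard $h^{1}$ with $(\forall^{\st}n)h(n)=0 \wedge (\exists m)h(m)\ne 0$; then $m_{0} := \mu m.\,h(m)\ne 0$ is necessarily nonstandard. The key step, and what I expect to be the main conceptual point, is the choice of a suitable \emph{standard} Kleene index $e_{*}$ to which $\UHYP$ is applied: take $e_{*}$ to be the index of the partial computable functional that, given an oracle $f$, searches for $\mu k.\,f(k)\ne 0$ (ignoring its numerical input). With this choice, $\varphi^{h}_{e_{*},s}(n) = m_{0}$ for every $n$ and every $s \geq m_{0}+c$, where $c$ is a fixed standard constant accounting for the algorithm's overhead.

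Since $h$ and $\Phi$ are standard, clause~(3) of $\T^{*}_{\st}$ gives that $n_{0} := \Phi(h)(2)(e_{*},0)$ and $G := \Phi(h)(1)(n_{0})$ are standard as well. Instantiating the defining clause of $\UHYP$ at $f := h$, $(e,k) := (e_{*},0)$, and $(s,m):=(m_{0}+c,\, m_{0})$ then yields $m_{0} < G$, contradicting the fact that $m_{0}$ is nonstandard while $G$ is standard. Thus $\RCAO$ proves $\UHYP^{+} \di \paai$; note that standard extensionality of $\Phi$ enters only through the template's general recipe, not through this specific contradiction, since the argument feeds $h$ itself into $\Phi$.

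Finally, exactly as in the proof of Theorem~\ref{plugi}, bringing $\UHYP^{+} \di \paai$ into a normal form analogous to \eqref{structure} via the algorithm $\mathcal{B}$ of Remark~\ref{algob}, and then applying Corollary~\ref{consresultcor2}, extracts closed terms in $\T^{*}$ witnessing the explicit equivalence $\UHYP \asa (\mu^{2})$. This last step is formal bookkeeping identical to the $\UPG$ case, and I do not anticipate new difficulties there beyond the careful tracking of the extensionality-functional parameter $\Xi$ in the extracted term.
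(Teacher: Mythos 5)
Your forward argument is correct, but it takes a genuinely different (and simpler) route than the paper's, so it is worth spelling out the difference. The paper follows the template literally: it fixes the zero function $f_{0}$, uses $h$ to build a second standard oracle $f$ that agrees with $f_{0}$ on all standard indices but equals $G:=\Phi(f_{0})(1)(\Phi(f_{0})(2)(e_{0},e_{0}))$ at nonstandard ones, applies standard extensionality to get $\Phi(f)\approx_{1\times1}\Phi(f_{0})$, and derives a contradiction of the form $G<_{0}G$; the Turing machine $e_{0}$ used there returns the first nonzero oracle \emph{value}. You instead feed the counterexample $h$ \emph{itself} into $\Phi$, use a machine $e_{*}$ that returns the first nonzero \emph{index} (essentially the halting time $m_{0}$), and obtain the contradiction that the standard number $G=\Phi(h)(1)(\Phi(h)(2)(e_{*},0))$ dominates the nonstandard $m_{0}$. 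As you rightly note, this dispenses with the extensionality step entirely for the derivation of $\paai$. The upshot is that the extensionality functional $\Xi$ appears only vacuously in the normal form and in the extracted term, whereas in the paper's argument it is genuinely used; your route thus yields a marginally tighter explicit implication. Two small caveats: your claim that the computation halts within $m_{0}+c$ steps for a fixed constant $c$ is a mild overstatement (the correct bound is some primitive recursive function of $m_{0}$, which suffices since it is still nonstandard), and for the reverse direction one should be careful that $(\mu^{2})$ is equivalent to $(\exists^{2})$ over $\RCAo$, rather than to $\ACA_{0}$ as a theory, though of course $(\mu^{2})$ implies $\ACA_{0}$ and does yield $\UHYP$ explicitly as you say. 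These are cosmetic and do not affect the soundness of the proposal.
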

\begin{proof}
The reverse implication is trivial as $(\mu^{2})$ can check if a given Turing machine halts, and avoid the output if necessary.  
Working in $\RCAO+\UHYP^{+}$, suppose $\neg\paai$, i.e.\ there is a function $h$ such that $(\forall^{\st}n^{0})h(n)=0\wedge (\exists m^{0})h(m)\ne 0$.    

\medskip

First of all, let the \emph{standard} number $e_{0}$ be the code of the following program for $\varphi_{e_{0}}^{f}$:  
On input $n$, set $k=n$ and check if $f(k)>0$;  If so, return this number;  If $f(k)=0$, repeat for $k+1$.  
Intuitively speaking, $e_{0}$ is such that $\varphi_{e_{0}}^{f}(n)$ outputs $m>0$ if starting at $k=n$, we eventually find $m=f(k)>0$, and undefined otherwise. 
Furthermore, let $f_{0}$ be the sequence $00\dots$ and define
\[
f(e):=
\begin{cases}
 \Phi(f_{0})(1)\big(\Phi(f_{0})(2)(e_{0}, e_{0})\big) &  (\exists s\leq e)h(s)\ne 0 \\
 0 & \textup{otherwise}
 \end{cases}, 
\]   
where $h$ is the exception to $\paai$ from the first paragraph of this proof.  
Note that $f\approx_{1}f_{0}$ by definition, implying that $\Phi$ satisfies $\Phi(f)\approx_{1\times 1} \Phi(f_{0})$ due to standard extensionality.  
However, the latter combined with $\UHYP$ gives us:
\begin{align}
\Phi(f_{0})(1)\big(\Phi(f_{0})(2)(e_{0}, e_{0})\big)
&=_{0}\Phi(f)(1)\big(\Phi(f)(2)(e_{0}, e_{0})\big)\label{ploef2}\\
&>_{0} \varphi_{e_{0}, s_{0}}^{f}\big(\Phi(f)(2)(e_{0}, e_{0})\big)\notag\\
&=_{0} \Phi(f_{0})(1)\big(\Phi(f_{0})(2)(e_{0}, e_{0})\big),\notag
\end{align}
for large enough $s_{0}$ such that $(\exists i\leq s_{0})h(i)\ne 0$.  Note that it is essential for the first step in \eqref{ploef2} that $\Phi(f)(2)(e_{0},e_{0})$ and $\Phi(f)(1)(\cdot)$ are \emph{standard}.  
The contradiction in \eqref{ploef2} implies that $\UHYP^{+}\di \paai$ in $\RCAO$.  
Now bring this implication in normal form and apply Corollary \ref{consresultcor2} to obtain the explicit implication.  
\end{proof}
In light of the proof of \cite{dslice}*{9.66}, the uniform version of the \emph{atomic model theorem} \textsf{AMT} is also (explicitly) equivalent to $(\mu^{2})$ by the previous theorem.  
Similarly, the proof of $\SADS\di \AMT$ in \cite{AMT}*{Theorem 4.1} is sufficiently uniform to (explicitly) yield $\USADS\di \textup{\textsf{UAMT}}$ in $\RCAo$.

\subsection{Non-computable sets}
In this section, we study the uniform version of a principle `very close to $\RCA_{0}$' in the RM-zoo.  In particular, Hirschfeldt states in \cite{dslice}*{p.~174} that the principle $\AST$ (See \cite{dslice}*{Def.\ 9.71}) is essentially the weakest principle in the zoo, in light of its equivalence to $\textsf{NCS}\equiv(\forall X^{1})(\exists Y^{1})(Y\not\leq_{T}X)$ by \cite{AMT}*{Theorem 6.3}.  
The proof of the latter is sufficiently uniform to yield the equivalence between the uniform versions of $\AST$ and \textsf{NCS}.     
Thus, we study the existence of non-computable sets as follows:  
\be\tag{$\NCS$}
 (\forall f^{1})(\exists g^{1})(\forall e^{0})(\exists n^{0})(\forall s^{0})[g(n)\ne_{0} \varphi_{e,s}^{f}(n)], 
\ee 
which has the following uniform version:
\be\tag{$\UNCS$}
(\exists \Phi^{1\di (1\times 1)})(\forall f^{1})(\forall e^{0}, s^{0})\big[ \Phi(f)(1)\big(\Phi(f)(2)(e)\big)\ne_{0} \varphi_{e, s}^{f}(\Phi(f)(2)(e))   \big].
\ee
\begin{thm}
In $\RCAo$, we have $\UNCS\asa (\mu^{2})$ and this equivalence is explicit.
\end{thm}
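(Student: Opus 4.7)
The plan is to follow the template from Section \ref{tempie} exactly as in the preceding theorems of Section \ref{CLASS}. The reverse direction $(\mu^2) \di \UNCS$ is routine: given $\mu$, define $\Phi(f)(2)(e):=0$ and let $\Phi(f)(1)(0)$ be chosen (using $\mu$ as a halting oracle relative to $f$) as a value distinct from $\varphi_e^f(0)$ whenever the latter halts, and e.g.\ $0$ otherwise; this is explicit in $\mu$. The bulk of the work is the explicit forward implication.

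For the forward direction, I would work in $\RCAO + \UNCS^{+}$ and suppose for contradiction that there is a standard $h$ with $(\forall^{\st} n) h(n) = 0$ and $(\exists m) h(m) \ne 0$, i.e.\ a counterexample to $\paai$. Fix a standard index $e_0$ coding the following oracle program for $\varphi_{e_0}^{f}$: on any input $n$, ignore $n$ and search for the least $k$ with $f(k)\ne 0$, returning $f(k)$ if it exists (and diverging otherwise). Let $f_0$ be the constant-$0$ sequence and set $M := \Phi(f_0)(1)\bigl(\Phi(f_0)(2)(e_0)\bigr)$, which is standard. Now define the standard function
\[
f(k) := \begin{cases} M+1 & (\exists s\le k)(h(s)\ne 0) \\ 0 & \text{otherwise} \end{cases}.
\]
Since $h(n) = 0$ for all standard $n$, we have $f \approx_1 f_0$, so standard extensionality of $\Phi$ yields $\Phi(f) \approx_{1\times 1} \Phi(f_0)$. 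In particular, $\Phi(f)(2)(e_0)=\Phi(f_0)(2)(e_0)$ (both sides are standard numbers since $e_0$ is standard) and $\Phi(f)(1)\bigl(\Phi(f)(2)(e_0)\bigr)=M$. But since $h(m)\ne 0$ for some (necessarily nonstandard) $m$, we have $f(m)=M+1$, so $\varphi_{e_0}^{f}$ is total and constant with value $M+1$; hence for all sufficiently large $s$, $\varphi_{e_0,s}^{f}\bigl(\Phi(f)(2)(e_0)\bigr)=M+1\ne M$. Wait—this would not yet contradict $\UNCS$; I need to adjust: instead set $f(k):=M$ (not $M+1$) in the first case, so that $\varphi_{e_0}^{f}$ becomes total with value $M$, yielding $\varphi_{e_0,s}^{f}\bigl(\Phi(f)(2)(e_0)\bigr)=M=\Phi(f)(1)\bigl(\Phi(f)(2)(e_0)\bigr)$ for large $s$, contradicting $\UNCS$ applied to $f$, $e_0$, $s$. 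This contradiction establishes $\UNCS^{+}\di\paai$ in $\RCAO$.

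Finally, exactly as in the proof of Theorem~\ref{plugi}, the implication $\UNCS^{+}\di\paai$ is brought into the normal form $(\forall^{\st}\Phi,\Xi,f)(\exists^{\st}m)(\ldots)$ via the algorithm $\mathcal{B}$ of Remark~\ref{algob}, where $\Xi$ is an extensionality functional for $\Phi$. Applying Corollary~\ref{consresultcor2} extracts a term $t$ from which Feferman's $\mu$ is defined by taking the maximum over the finite sequence $t(\Phi,\Xi,f)$, yielding the explicit implication $\UNCS \di (\mu^2)$. I expect the genuinely delicate step to be the design of the auxiliary $f$: it must simultaneously be $\approx_1$-close to $f_0$ (so standard extensionality transfers $\Phi(f_0)$ to $\Phi(f)$ at the standard argument $\Phi(f_0)(2)(e_0)$), yet differ from $f_0$ enough on the nonstandard part to render $\varphi_{e_0}^{f}$ convergent with exactly the target value $M$; the remaining normal-form manipulation and term extraction are mechanical applications of the template.
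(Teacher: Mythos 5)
Your forward direction is essentially the paper's argument, and arguably cleaner. The paper also defines a standard index (there called $e_1$) and a standard $f\approx_1 f_0$ with $f(k)=\Phi(f_0)(1)\big(\Phi(f_0)(2)(e_1)\big)$ for sufficiently large (nonstandard) $k$, then uses standard extensionality to force $\Phi(f)(1)\big(\Phi(f)(2)(e_1)\big)=M$ while $\varphi_{e_1,s_0}^{f}\big(\Phi(f)(2)(e_1)\big)=M$ for nonstandard $s_0$, contradicting $\UNCS$. The paper's machine is encoded via a pairing function $\pi$ and its inverse $\xi$ (it returns $\xi(k)(2)$ when it finds $k$ with $f(k)\ne 0$ and $\xi(k)(2)>0$), which is more convoluted than your machine $e_0$ that simply returns the first nonzero oracle value; the underlying idea — arrange for the machine to reproduce exactly the ``avoided'' value $M$ once the oracle departs from $f_0$ at a nonstandard location — is the same, and your streamlined encoding is fine. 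The concluding normal-form manipulation and term extraction via Corollary~\ref{consresultcor2} match the paper's template.

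However, your reverse direction $(\mu^2)\di\UNCS$ is broken as written. You set $\Phi(f)(2)(e):=0$ for all $e$, so $\UNCS$ then demands a \emph{single} number $\Phi(f)(1)(0)$ with $\Phi(f)(1)(0)\ne\varphi_{e,s}^{f}(0)$ for \emph{every} $e$ and $s$. That is impossible: for any candidate value $v$ there is an index $e_v$ with $\varphi_{e_v}^{f}(0)=v$ (a constant machine). The intended construction must let the witness depend on $e$, e.g.\ $\Phi(f)(2)(e):=e$ and $\Phi(f)(1)(e):=\varphi_e^{f}(e)+1$ when $\mu$ detects halting and $0$ otherwise; this diagonalisation is exactly ``$(\mu^2)$ supplies the Turing jump'', which is all the paper says. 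With that correction your proof matches the paper's.
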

\begin{proof}
The explicit implication $(\mu^{2})\di \UNCS$  is trivial as $(\mu^{2})$ supplies the Turing jump of any set.  
Working in $\RCAO+\UNCS^{+}$, suppose $\neg\paai$, i.e.\ there is a function $h$ such that $(\forall^{\st}n^{0})h(n)=0\wedge (\exists m^{0})h(m)\ne 0$.    

\medskip

First of all, fix a standard pairing function $\pi^{1}$ and its inverse $\xi^{1}$.  Now let the \emph{standard} number $e_{1}$ be the code of the following program:  
On input $n$, set $k=n$ and check if $k\in A$ and if $\xi(k)(2)>0$;  If so, return this non-zero component;  If $k\not\in A$ or $\xi(k)(2)=0$, repeat for $k+1$.  Intuitively speaking, $e_{1}$ is such that $\varphi_{e_{1}}^{A}(n)$ outputs $m>0$ if starting at $k=n$, we eventually find $\pi((l,m))\in A$, and undefined otherwise. 
Furthermore, let $f_{0}$ be the sequence $00\dots$ and define
\[
f(e):=
\begin{cases}
 \Phi(f_{0})(1)\big(\Phi(f_{0})(2)(e_{1})\big) &  (\exists i\leq e)h(i)\ne 0 \\
 0 & \textup{otherwise}
 \end{cases}, 
\]   
where $h$ is the exception to $\paai$ from the first paragraph of this proof.  
Note that $f\approx_{1}f_{0}$ by definition, implying that $\Phi$ satisfies $\Phi(f)\approx_{1\times 1} \Phi(f_{0})$ due to standard extensionality.  
However, the latter combined with $\UNCS$ gives us:
\begin{align}
\Phi(f_{0})(1)\big(\Phi(f_{0})(2)(e_{1})\big)=\Phi(f)(1)\big(\Phi(f)(2)(e_{1})\big)
&\ne \varphi_{e_{1}, s_{0}}^{f}(\Phi(f)(2)(e_{1}))\label{ploef}\\
&= \Phi(f_{0})(1)\big(\Phi(f_{0})(2)(e_{1})\big),\notag
\end{align}
for large enough (infinite) $s_{0}$.  Note that it is essential for the first step in \eqref{ploef} that $\Phi(f)(2)(e_{1})$ and $\Phi(f)(1)(\cdot)$ are \emph{standard}.  
The contradiction in \eqref{ploef} implies that $\RCAO$ proves $\UNCS^{+}\di \paai$.  Now bring the latter in normal form and apply Corollary \ref{consresultcor2} to obtain the explicit implication.   
\end{proof}
Related to the above is the Kleene-Post theorem (See \cite{klepol} and \cite{zweer}*{Chapter VI}) stating the existence of (Turing) incomparable sets.  The related principle is:
\be\tag{$\KPT$}
 (\forall f^{1})(\exists g^{1}, h^{1})\big[ f\leq_{T} \langle g, h\rangle \wedge g~|_{T}~h   \big],
\ee   
We denote by $\UKPT$ the fully uniform, i.e.\ with all existential quantifiers removed, version of $\KPT$.  Clearly, $\UKPT$ implies $\UNCS$ and the equivalence $\UKPT\asa (\mu^{2})$ is now straightforward in light of \cite{zweer}*{VI.1, p.\ 93}.  


\section{Refining our results: meta-reversal and intuitionistic logic}\label{refine}
In this section, we refine some of the results from \cite{samzoo} and the previous sections.  
First of all, we derive Theorem \ref{plugi} using only systems based on intuitionistic logic in Section \ref{culdesac}.  The associated proof gives rise to a refinement of our template from Section \ref{tempie}.  Secondly, we provide a `meta-reversal' for Corollary \ref{sef2} in Section~\ref{herbie} as follows:  We show that a version of \eqref{frood2}, called the \emph{Herbrandisation} of $\UPG^{+}\di \paai$, implies the nonstandard implication $\UPG^{+}\di \paai$ from which it was obtained.  As we will see, these results generalise to all explicit equivalences proved above and in \cite{samzoo}.    
\subsection{Non-classical equivalences}\label{culdesac}
The explicit equivalences from the previous sections and \cite{samzoo} were established in $\RCAO$ and $\RCAo$, i.e.\ systems based on classical logic.  
We show in this section that Corollary~\ref{sef2} essentially goes through for systems based on intuitionistic logic.  
As will become clear, the same technique applies to all theorems in this paper and \cite{samzoo}.  

\medskip

This `constructive result' is somewhat surprising, as our hitherto obtained results seem to make essential use of non-constructive principles:  
For instance, $\UPG^{+}\di \paai$ was proved via a \emph{proof-by-contradiction}, while obtaining the normal form of this implication involves the \emph{independence of premises} principle to bring the standard quantifiers up front.  Furthermore, basic results from computability theory, like \emph{Post's complementation theorem} (\cite{zweer}*{Theorem 1.12}), 
already require non-constructive principles (\cite{troeleke1}*{\S4.5.3}), while our \emph{nonstandard} technique will turn out to have a constructive counterpart.   

\medskip

The previous observations notwithstanding, let $\H$ be the conservative extension of Heyting arithmetic introduced in \cite{brie}*{\S5.2}.  
Note that $\P$ from Section \ref{popo} is just $\H$ with classical instead of intuitionistic logic.  
We consider two axioms of $\H$, essential for the proof of Theorem \ref{sefke} below.  
\bdefi[Two axioms of $\H$]\label{flah}~
\begin{enumerate}\rm
\item $\textsf{HIP}_{\forall^{\st}}$
\[
[(\forall^{\st}x)\phi(x)\di (\exists^{\st}y)\Psi(y)]\di (\exists^{\st}y')[(\forall^{\st}x)\phi(x)\di (\exists y\in y')\Psi(y)],
\]
where $\Psi(y)$ is any formula and $\phi(x)$ is an internal formula of \textsf{E-HA}$^{\omega*}$. 
\item $\textsf{HGMP}^{\st}$
\[
[(\forall^{\st}x)\phi(x)\di \psi] \di (\exists^{\st}x')[(\forall x\in x')\phi(x)\di \psi] 
\]
where $\phi(x)$ and $\psi$ are internal formulas in the language of \textsf{E-HA}$^{\omega*}$.
\end{enumerate}
\edefi
Intuitively speaking, the two axioms of Definition \ref{flah} allow us to perform a number of \emph{non-constructive operations} (namely \emph{Markov's principle} and \emph{independence of premises}) 
on standard objects.  
In other words, the standard world of $\H$ is `a little non-constructive', but this does not affect the conservation result over Heyting arithmetic: $\H$ and $\textsf{E-HA}^{\omega}$ prove the same \emph{internal} formulas by \cite{brie}*{Cor.\ 5.6}.  

\medskip

Surprisingly, we will observe that the axioms from Definition \ref{flah} are \emph{exactly} what is needed for the proof of Corollary \ref{sef2} to go through constructively.  
As in the proof of Theorem \ref{plugi}, we shall focus on the implication $\UPG^{+}\di \paai$, while the other implication is treated analogously.  
Note that $\UPG(\Phi)$ is $\UPG$ with the leading quantifier omitted and $\MU(\mu)$ is $(\forall f^{1})\textsf{\textup{MUP}}(f, \mu)$.  
\begin{thm}\label{sefke}
From the proof of $\UPG^{+}\di \paai$ in $\H $, a term $t$ can be extracted such that $\textsf{\textup{E-HA}}^{\omega*}$ proves:
\be\label{froodke}
(\forall \Phi^{1\di 1}, f^{1})\big[ \UPG(\Phi)\di  \textsf{\textup{MUP}}(f, t(\Psi, \Phi, f))  \big],
\ee
where $\Psi$ is an extensionality functional for $\Phi$.
\end{thm}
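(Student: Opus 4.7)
The plan is to mimic the proof of Theorem \ref{plugi} while replacing the classical system $\P$ by its intuitionistic counterpart $\H$, exploiting the two axioms $\HGMP^{\st}$ and $\HIP_{\forall^{\st}}$ of $\H$ to justify the apparently non-constructive moves, and then invoking the intuitionistic term extraction theorem of \cite{brie}*{\S5.2} in place of Theorem \ref{consresult}.

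First, I would check that the derivation of $\UPG^{+}\di \paai$ can be carried out in $\H$. The construction of the auxiliary standard functions $f_{0}, g_{0}$ from $f, g$ and the hypothetical counterexample $h$ to $\paai$ is purely internal and standard-parameter based, so it goes through verbatim. The only seemingly classical step is the passage from the derived contradiction back to an instance of $\paai$. Since $\paai$ in its normal form \eqref{curk} is an implication between internal formulas prefixed by standard quantifiers, this conclusion is recovered intuitionistically by invoking $\HGMP^{\st}$: it converts the refutation of ``$(\forall^{\st}n)h(n)=0$ holds while no standard bound witnesses $(\exists i)h(i)\ne 0$'' into the desired Herbrandised $(\exists^{\st}m^{0})$-statement.

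Second, the implication $\UPG^{+}\di \paai$ must be brought into a normal form $(\forall^{\st}\underline{x})(\exists^{\st}\underline{y})\psi(\underline{x},\underline{y})$ suitable for intuitionistic term extraction. The proof of Theorem \ref{plugi} achieved this by pulling quantifiers to the front using classical logic; in $\H$ the same manipulations are justified by $\HIP_{\forall^{\st}}$, which is the replacement for independence of premises, and $\HGMP^{\st}$, which is the replacement for Markov's principle, both restricted to standard parameters. Combining the standard-extensionality clause $(\forall^{\st}Z^{1})(\exists^{\st}N^{0})B(Z,N,\Phi)$ with the remainder of $\UPG^{+}$ and with \eqref{curk} then yields a statement of the shape \eqref{structure}, now provable in $\H$.

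Third, applying the intuitionistic term extraction theorem of \cite{brie}*{\S5.2} to this normal form produces a closed term $t$ of $\textsf{E-HA}^{\omega*}$ such that, upon dropping `$\st$' throughout, the displayed formula \eqref{froodke} becomes provable in $\textsf{E-HA}^{\omega*}$; the functional $\Psi$ appears in $t$ as the extensionality witness $\Xi$ supplied by the normal form. The main obstacle is the verification of step one: one must confirm that the classical contradiction in Theorem \ref{plugi} admits an intuitionistic derivation modulo $\HGMP^{\st}$, i.e.\ that every appeal to double negation is confined to internal formulas covered by that axiom, and that no further non-constructive reasoning sneaks into the standard-extensionality manipulations \eqref{EX5} of $\Phi$.
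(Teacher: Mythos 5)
Your proposal follows essentially the same route as the paper: derive $\UPG^{+}\di\paai$ in $\H$ by observing that the contradiction argument of Theorem~\ref{plugi} is intuitionistically valid up to one use of Markov's principle relative to `st' (obtained from $\HGMP^{\st}$ with $\psi\equiv[0{=}1]$); then normalise the implication inside $\H$ via $\HIP_{\forall^{\st}}$ and $\HGMP^{\st}$; then apply the intuitionistic term extraction result from \cite{brie}. One small imprecision in your step one: what the classical proof yields intuitionistically is the weakened implication $\UPG^{+}\di(\forall^{\st}f)[(\exists n)f(n){=}0\di\neg(\forall^{\st}n)f(n)\ne0]$, and $\HGMP^{\st}$ is used precisely to discharge that residual double negation, not to ``Herbrandise a refutation''; your phrasing conflates these, but the underlying idea is the same and the argument is sound.
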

\begin{proof}
To show that $\H$ proves $\UPG^{+}\di \paai$, it is straightforward to verify that the second part of the proof of Theorem \ref{plugi} yields that 
\be\label{conki}
\UPG^{+}\di (\forall^{\st}f^{1})\big[  (\exists n)f(n)=0 \di  \neg[(\forall^{\st}n)f(n)\ne0]\big], 
\ee
since $\H$ is based on intuitionistic logic.  However, by Definition \ref{flah}, the system $\H$ proves\footnote{Take $\psi\equiv [0=1]$ and $\phi$ a decidable formula in \textsf{HGMP}$^{\st}$ in Definition \ref{flah}.} Markov's principle relative to `st', and hence:
\be\label{boner}
\neg[(\forall^{\st}n)f(n)\ne0]\di (\exists^{\st}n)f(n)=0.
\ee
Combining \eqref{conki} and \eqref{boner}, we obtain $\UPG^{+}\di \paai$ inside $\H$.  Now, the latter sysem also has a term extraction result, namely \cite{brie}*{Theorem 5.9}, 
which is identical to Corollary~\ref{consresultcor}.  Hence, we only need to bring $\UPG^{+}\di \paai$ into a normal form like \eqref{structure} \emph{inside} $\H$, and \eqref{froodke} follows.  
We now bring $\UPG^{+}\di \paai$ into a slight variation of the normal form \eqref{structure} inside $\H$.  

\medskip

First of all,  applying the principle $\textsf{HIP}_{\forall^{\st}}$ from Definition \ref{flah} to $\paai$, the latter implies \eqref{curk}, i.e.\ the former has a normal form, 
say $(\forall^{\st}f^{1})(\exists^{\st}n^{0})C(f,n)$.  
Secondly, the second conjunct of $\UPG^{+}$ immediately implies (in $\H$) that for all standard $f^{1}, g^{1}, u^{1}, v^{1}, k^{0}, i^{0}\leq 1$, we have
\be\label{dalk}
\big((\forall^{\st}N^{0})(\overline{u}N=_{0}\overline{v}N\wedge \overline{f}N=_{0} \overline{g}N)\big)\di \overline{\Phi(f,g)(i)}k=_{0}\overline{\Phi(u,v)(i)}k, 
\ee
and applying $\textsf{HGMP}^{\st}$ to \eqref{dalk}, we obtain 
\[
(\exists^{\st}N')\big[(\forall N^{0}\leq N')(\overline{u}N=_{0}\overline{v}N\wedge \overline{f}N=_{0} \overline{g}N)\di \overline{\Phi(f,g)(i)}k=_{0}\overline{\Phi(u,v)(i)}k\big].  
\]
Define $Z^{1}$ as a code for the tuple of variables $f^{1}, g^{1}, u^{1}, v^{1}, k^{0}, i^{0}\leq 1$ and let $B(Z,N', \Phi)$ be the formula in square brackets in the previous centred formula.
Thus, the second conjunct of $\UPG^{+}$ has the normal form $(\forall^{\st}Z^{1})(\exists^{\st}M^{0})B(Z, M, \Phi)$ and $\UPG^{+}\di \paai$ implies:
\[
\big[(\exists^{\st}\Phi)(\forall^{\st}h^{1},g^{1})A(h, g, \Phi)\wedge  (\exists^{\st}\Xi^{2})(\forall^{\st}Z^{1})B(Z, \Xi(Z), \Phi)\big] \di (\forall^{\st}f^{1})(\exists^{\st}y^{0})C(f, y),
\]
where $A(\cdot)$ is \eqref{frok} and the antecedent is strengthened by introducing $\Xi$.  Inside $\H$, we can bring outside the quantifiers involving the variables $\Psi$, $\Xi$, and $f$, yielding:  
\[
(\forall^{\st}\Phi, \Xi, f)\Big(\big[(\forall^{\st}h^{1},g^{1})A(h, g, \Phi)\wedge  (\forall^{\st}Z^{1})B(Z, \Xi(Z), \Phi)\big] \di (\exists^{\st}y^{0})C(f, y)\Big),
\]
which has exactly the right syntactic structure to apply $\HIP_{\forall^{\st}}$, and we obtain:
\[
(\forall^{\st}\Phi, \Xi, f)(\exists^{\st}\sigma^{0^{*}})\Big(\big[(\forall^{\st}h^{1},g^{1})A(h, g, \Phi)\wedge  (\forall^{\st}Z^{1})B(Z, \Xi(Z), \Phi)\big] \di (\exists y^{0}\in \sigma)C(f, y)\Big),
\]
and the latter now has exactly the right structure to apply $\HGMP^{\st}$, and we obtain:
\begin{align}
&(\forall^{\st}\Phi, \Xi, f)(\exists^{\st}\sigma^{0^{*}}, W^{1^{*}}, V^{1^{*}})\label{bling}\\
&\Big(\big[(\forall h^{1},g^{1}\in W)A(h, g, \Phi)\wedge  (\forall Z^{1}\in V)B(Z, \Xi(Z),\Phi)\big] \di (\exists y^{0}\in \sigma)C(f, y)\Big),\notag
\end{align}
which is a slight variation of the normal form \eqref{structure}, and the theorem follows by applying the term extraction result from \cite{brie}*{Cor.\ 5.9}.  
\end{proof}
\begin{cor}
In $\textup{\textsf{E-HA}}^{\omega*}$, $\UPG\asa (\mu^{2})$.  
\end{cor}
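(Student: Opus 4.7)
The plan is to prove the two implications of $\UPG \asa (\mu^{2})$ in $\textsf{E-HA}^{\omega*}$.

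The forward direction $\UPG \di (\mu^{2})$ follows immediately from Theorem \ref{sefke}. Given any witness $\Phi$ of $\UPG$ together with an associated extensionality functional $\Psi$ for $\Phi$ (which exists as an unbounded-search term, as noted following \eqref{turki}), one sets $\mu := \lambda f.\, t(\Psi, \Phi, f)$; the extracted term $t$ then guarantees $\MU(\mu)$, hence $(\mu^{2})$.

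For the reverse direction $(\mu^{2}) \di \UPG$, I would construct a witness $\Phi$ directly from a given $\mu$ satisfying $\MU(\mu)$. Given $f, g$ satisfying the antecedent of $\UPG$, I iterate $g$ by primitive recursion to build the chain $\sigma_{0} := \langle\rangle$ and $\sigma_{i+1} := g(i, \sigma_{i})$, which is monotone in the prefix order since $g(i,\tau) \succeq \tau$. I then set $\Phi(f,g)(2)(i) := \sigma_{i+1}$, and define $\Phi(f,g)(1)(n)$ by applying $\mu$ to locate the least $k$ with $|\sigma_{k}| > n$: in the positive case the output is the $n$-th bit of $\sigma_{k}$, and in the negative case (detectable by evaluating the search-target at the returned index) the output is $0$. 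Monotonicity of the chain ensures that $\sigma_{i+1}$ is a prefix of $\Phi(f,g)(1)$, and $D^{f}_{i}(\sigma_{i+1})$ holds by the antecedent of $\UPG$.

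The main obstacle is the intuitionistic case split in the definition of $\Phi(f,g)(1)(n)$. Classically one would simply split on whether $(\exists k)(|\sigma_{k}| > n)$; intuitionistically this is a $\Sigma^{0}_{1}$ statement, but $\MU(\mu)$ yields such a witness whenever one exists and, crucially, the result of $\mu$ can be tested decidably at the returned index to see whether it witnesses the $\Sigma^{0}_{1}$ fact. This is precisely the computational role of $\mu$ that substitutes for the Markov-style step \eqref{boner} used in the proof of Theorem \ref{sefke}. Once the constructive definition of $\Phi$ is in place, the required properties of $\UPG(\Phi)$ are verified without excluded middle, and combining this with the forward direction yields $\UPG \asa (\mu^{2})$ in $\textsf{E-HA}^{\omega*}$.
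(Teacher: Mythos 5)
Your forward direction matches the paper exactly: it is an immediate application of Theorem \ref{sefke}, taking $\mu:=\lambda f.\,t(\Psi,\Phi,f)$ for the extracted term $t$, a realizer $\Phi$ of $\UPG$, and an extensionality functional $\Psi$ for $\Phi$ (supplied under the convention discussed after Definition \ref{himplicit}). For the reverse direction $(\mu^2)\di\UPG$, you give a direct term construction, whereas the paper's route --- implicit in the phrase ``the other implication is treated analogously'' in the proof of Theorem \ref{sefke} and in the template of Section \ref{tempie} --- is to prove $\paai\di\UPG^{+}$ in the nonstandard system $\H$, normalize with $\textsf{HIP}_{\forall^{\st}}$ and $\HGMP^{\st}$, and apply term extraction. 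Both routes are correct and yield the required explicit term. Your construction is the more elementary one: primitive recursion on $g$ gives the monotone chain $\sigma_i$, and $\MU(\mu)$ defines the path $\Phi(f,g)(1)$ via a decidable test on $\mu$'s output, which is exactly the constructive surrogate for the classical case split on the $\Sigma^0_1$ predicate $(\exists k)(|\sigma_k|>n)$ --- a role analogous to that played by $\HGMP^{\st}$ on the nonstandard side. The direct construction bypasses the nonstandard detour and exposes the constructive content plainly, while the paper's route keeps the treatment uniform with the forward direction. One caveat inherent to the paper's formulation rather than a flaw of your argument: the antecedent of \eqref{frok} quantifies over $\tau\leq_{0^*}1$ without constraining $g$'s outputs, so the iteration $\sigma_{i+1}=g(i,\sigma_i)$ stays within the antecedent's scope only under the intended reading that $g$ maps binary strings to binary strings.
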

Note that we could have worked in a fragment of $\H$ similar to $\RCAO$.  
We finish this section with a remark stipulating the refinement using intuitionistic logic of the template in Section \ref{tempie}.  
\begin{rem}\rm
Based on the proof of Theorem \ref{sefke}, the template from Section \ref{tempie} can be refined as follows to work for intuitionistic instead of classical logic.  
\begin{enumerate}
\item Replace $\RCAO$ and $\RCAo$ by $\H$ and $\textsf{E-HA}^{\omega*}$.  
\item In step \eqref{forkiiii} of the template, we obtain that $\H\vdash UT^{+}\di \paai$ from
\[
UT^{+}\di (\forall^{\st}f^{1})\big[  (\exists n)f(n)=0 \di  \neg[(\forall^{\st}n)f(n)\ne0]\big], 
\]
and \textsf{HGMP}$^{\st}$ as in \eqref{boner} from the proof of Theorem \ref{sefke}.  
\item In step \eqref{frink} of the template, use \textsf{HGMP}$^{\st}$ and $\textsf{HIP}_{\forall^{\st}}$ as in the proof of Theorem \ref{sefke} to obtain a normal form of $UT^{+}\di \paai$.  
\item In step \eqref{frink2} of the template, apply the term extraction result formulated in \cite{brie}*{Theorem~5.9} to the normal form of $UT^{+}\di \paai$.  
\end{enumerate}
Finally, it is surprising -in our opinion- that $\H$ includes exactly the right `non-constructive' axioms -\emph{relative to} `\st'- as in Definition \ref{flah} to make the proof of Theorem \ref{sef2} go through in a constructive setting.  
\end{rem}

\subsection{Hebrandisations}\label{herbie}
In this section, we provide a positive answer to the following natural RM-style question:  
\begin{quote}
The algorithm $\RS$ takes as input implications in Nonstandard Analysis and produces explicit implications related to the RM zoo.  
Is it possible to re-obtain these nonstandard `pre-cursor' implications from their `post-cursor' explicit implications? 
\end{quote}
To answer this question, we shall study the explicit implication $\UPG\di (\mu^{2})$ from Theorem \ref{plugi}, in particular a variation of the second conjunct of \eqref{frood2}, defined as:   
\begin{align}
(\forall \Phi, \Xi, f^{1}) \Big[\big[ & (\forall Z^{1}\in i(\Psi, \Xi, f)(1))B(Z, \Xi(Z), \Phi) \wedge  (\forall f,g^{1}\in i(\Phi, \Xi, f)(2))A(f,g,\Phi) \big]\notag\\
&\di \big( (\exists n)f(n)=0\di (\exists j\leq o(\Psi, \Xi,f ))f(j)=0\big)\Big]\tag{$\HIO(i,o)$}
\end{align}
where $A(\cdot)$ is \eqref{frok} from $\UPG$ and $B(\cdot)$ is \eqref{tokamak} and expresses that $\Xi$ is an extensionality functional for $\Phi$.  
We refer to $\HIO(i,o)$ as the \emph{Herbrandisation} of $\UPG^{+}\di \paai$.  
Intuitively speaking, the functional $i$ in the Herbrandisation tells us `how much' $\Phi$ has to satisfy $\UPG$ \emph{for a particular} $f^{1}$ in order to obtain 
the value of the mu-operator at $f$ via $o$ (and the same for $\Xi$).  In other words, the Herbrandisation is a `pointwise' version of the second conjunct of \eqref{frood2}.  

\medskip
  
We have the following theorem establishing a `meta-reversal' between the implication $\UPG^{+}\di \paai$ and its Herbrandisation $\HIO(i,o)$.    
\begin{thm}[Meta-reversal]\label{self}
From the proof of $\UPG^{+}\di \paai$ in $\RCAO $, two terms $i, o$ can be extracted such that $\RCAo$ proves $\HIO(i,o)$.  If there are terms $i, o$ such that $\RCAo$ proves $ \HIO(i,o)$, then $\RCAO$ proves $\UPG^{+}\di \paai$.    
\end{thm}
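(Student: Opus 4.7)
The plan is to treat the two directions separately: the forward extraction reduces, after a classical rewriting, to the usual term extraction of Corollary~\ref{consresultcor2}, while the meta-reversal exploits the fact that closed terms of $\T^{*}$ preserve standardness (axiom schema $\T^{*}_{\st}$).

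For the forward direction, we resume the proof of Theorem~\ref{plugi} at the normal form~\eqref{structure}. Rather than immediately applying term extraction there, we first classically rewrite the inner implication by pushing negation through its antecedent, obtaining
\[
(\forall^{\st}\Phi, \Xi, f)(\exists^{\st}h, g, Z, m)\big[\neg A(h,g,\Phi)\vee \neg B(Z, \Xi(Z), \Phi) \vee C(f, m)\big],
\]
which is already in the normal form $(\forall^{\st}\underline{x})(\exists^{\st}\underline{y})\psi$ demanded by Corollary~\ref{consresultcor2}. Applying the extraction algorithm $\mathcal{A}$ yields a term returning finite sequences of candidate witnesses $(h, g, Z, m)$; collecting the $(h,g)$- and $Z$-components into the two coordinates of $i(\Phi,\Xi,f)$, and taking the maximum of the $m$-components as $o(\Phi,\Xi,f)$, the resulting conclusion is precisely $\HIO(i,o)$.

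For the meta-reversal, we work in $\RCAO$ and assume $\UPG^{+}$, which supplies a standard $\Phi$ satisfying $\UPG$ on standard inputs and being standard extensional. The first step is to manufacture a standard extensionality functional $\Xi$: the standard extensionality of $\Phi$ has the normal form $(\forall^{\st}Z)(\exists^{\st}N)B(Z, N, \Phi)$ derived already in the proof of Theorem~\ref{plugi}, so applying $\HAC_{\INT}$ yields a standard $\Xi'$ returning finite sequences of suitable $N$'s, and taking $\Xi(Z):=\max \Xi'(Z)$ produces a standard functional with $(\forall^{\st}Z)B(Z, \Xi(Z), \Phi)$ (using monotonicity of $B$ in $N$). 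Now fix any standard $f^{1}$. Since $i, o$ are closed terms of $\T^{*}$, they are standard by $\T^{*}_{\st}$, and by closure of standardness under application both $i(\Phi,\Xi,f)$ and $o(\Phi,\Xi,f)$ are standard; external induction on the length then shows that every entry of the standard finite sequence $i(\Phi,\Xi,f)$ is itself standard. The antecedent of $\HIO(i,o)$ is therefore satisfied ($A$ holding on the standard pairs by $\UPG^{+}$ and $B$ on the standard $Z$'s by construction of $\Xi$), and the consequent supplies a standard bound $o(\Phi,\Xi,f)$ for a witness of $f(j)=0$, from which a second external induction delivers a standard witness, yielding the required instance of $\paai$ for $f$.

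The only substantive obstacle is the standardness bookkeeping in the reversal: one must invoke $\T^{*}_{\st}$ and external induction to propagate standardness through the closed terms $i$ and $o$ and inside the standard finite sequences they return. Once these routine preservation facts are in hand, both directions become direct applications of the term extraction machinery together with an unwinding of definitions.
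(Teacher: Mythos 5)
Your proof is correct and follows essentially the same strategy as the paper's: the forward direction applies term extraction (Corollary~\ref{consresultcor2}) after a classical rewriting of the implication in~\eqref{structure}, and the reverse direction exploits that closed terms are standard and that standardness is preserved under application.  A minor difference in the forward direction: the paper passes from the form with $(\forall^{\st}h,g)A$, $(\forall^{\st}Z)B$ in the antecedent to $(\exists^{\st}h,g,Z,m)[A\wedge B\to C]$ by ``exporting'' the universals, whereas you push negation through to the equivalent disjunctive form $(\exists^{\st}h,g,Z,m)[\neg A\vee\neg B\vee C]$; classically these are identical and both land in the normal form required.  In the reverse direction you are actually more careful than the paper: the published proof simply asserts that ``the antecedent of $\HIO(i,o)$ holds if $\UPG^{+}$ does'' for standard $\Phi,\Xi,f$, leaving implicit where the standard extensionality functional $\Xi$ comes from and why the entries of the standard finite sequence $i(\Phi,\Xi,f)$ are themselves standard.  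You supply both: you obtain $\Xi$ by applying $\HAC_{\INT}$ to the normal form of standard extensionality and taking maxima (correctly noting the required monotonicity of $B$ in $N$), and you invoke external induction $\textsf{IA}^{\st}$ to propagate standardness to the sequence entries.  Both points are genuine gaps in the paper's terse exposition, so your proposal is not only correct but a more complete version of the intended argument.
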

\begin{proof}
The first part of the theorem easily follows from the proof of Theorem \ref{plugi}.  Indeed, consider \eqref{structure}, but without the `st' in the antecedent dropped, as follows:
\[
(\forall^{\st}\Phi, \Xi, f)(\exists^{\st}m^{0})\Big[ \big[(\forall^{\st} h^{1}, g^{1})A(f,g, \Phi)\wedge   (\forall^{\st} Z^{1})B(Z, \Xi(Z),\Phi) \big]\di C(f,m)\Big], 
\]
which yields the following by bringing out the standard quantifiers:
\be\label{structure3}
(\forall^{\st}\Phi, \Xi, f)(\exists^{\st}m^{0}, h^{1}, g^{1}, Z^{1})\Big[ \big[A(h,g, \Phi)\wedge   B(Z, \Xi(Z),\Phi) \big]\di C(f,m)\Big], 
\ee 
Apply Corollary \ref{consresultcor2} to `$\RCAO\vdash \eqref{structure3}$' to obtain a term $t$ such that $\RCAo$ proves
\[
(\forall \Phi, \Xi, f)(\exists m^{0}, h^{1}, g^{1}, Z^{1}\in t(\Phi, \Xi, f))\Big[ \big[A(h,g, \Phi)\wedge   B(Z, \Xi(Z),\Phi) \big]\di C(f,m)\Big], 
\] 
Define the term $o$ as the maximum of all entries of $t$ pertaining to $m$;  define $i(\Psi, \Xi, f)(i)$ for $i=1$ (resp.\ $i=2$) as all entries of $t$ pertaining to $h,g$ (resp.\ pertaining to $Z$).   
Then $\HIO(i, o)$ follows and this part is done.  

\medskip

For the second part of the theorem, suppose $i, o$ are terms such that $\RCAo$ proves $\HIO(i,o)$.  By the second standardness axiom (See Definition \ref{debs}), these terms are standard in $\RCAO$, i.e.\ the latter proves 
$\HIO(i,o)\wedge\st(i)\wedge\st(o)$.  Hence, for standard $\Phi, \Xi, f$, the terms $o(\Phi, \Xi, f)$ and $i(\Phi, \Xi, f)$ are standard (by the third standardness axiom in Definition \ref{debs}), and the consequent of $\HIO(i, o)$ clearly yields $\paai$, while the antecedent of the $\HIO(i, o)$ holds if $\UPG^{+}$ does.      
Thus, we obtain that $\RCAO$ proves $\UPG^{+}\di \paai$, and we are done.  
\end{proof}
Similar results hold for the first disjunct in \eqref{frood2}.  In general, one can obtain the Herbrandisation for any nonstandard equivalence from this paper and \cite{samzoo}, and prove a result 
similar to the previous theorem.  Intuitively speaking, the nonstandard implication $\UPG^{+}\di \paai$ and its Herbrandisation $\HIO(i,o)$ can be said to be `meta-equivalent' or `share the same computational content' in the sense of the theorem, namely that one can be obtained from the latter via an algorithmic manipulation.

\begin{ack}\rm
This research was supported by the following funding bodies: FWO Flanders, the John Templeton Foundation, 
the Alexander von Humboldt Foundation, LMU Munich, and the Japan Society for the Promotion of Science.  
The author expresses his gratitude towards these institutions.  The author would also like to thank the anonymous referee who has offered a number of suggestions which greatly improved this paper.     
\end{ack}

\begin{bibdiv}
\begin{biblist}
\bib{compdnr}{article}{
  author={Ambos-Spies, Klaus},
  author={Kjos-Hanssen, Bj{\o }rn},
  author={Lempp, Steffen},
  author={Slaman, Theodore A.},
  title={Comparing \textup {DNR} and \textup {WWKL}},
  journal={J. Symbolic Logic},
  volume={69},
  date={2004},
  number={4},
  pages={1089--1104},
}

\bib{avi2}{article}{
  author={Avigad, Jeremy},
  author={Feferman, Solomon},
  title={G\"odel's functional \(``Dialectica''\) interpretation},
  conference={ title={Handbook of proof theory}, },
  book={ series={Stud. Logic Found. Math.}, volume={137}, },
  date={1998},
  pages={337--405},
}

\bib{brie}{article}{
  author={van den Berg, Benno},
  author={Briseid, Eyvind},
  author={Safarik, Pavol},
  title={A functional interpretation for nonstandard arithmetic},
  journal={Ann. Pure Appl. Logic},
  volume={163},
  date={2012},
  number={12},
  pages={1962--1994},
}

\bib{bennosam}{article}{
  author={van den Berg, Benno},
  author={Sanders, Sam},
  title={Transfer equals Comprehension},
  journal={Submitted},
  volume={},
  date={2014},
  number={},
  note={Available on arXiv: \url {http://arxiv.org/abs/1409.6881}},
  pages={},
}

\bib{igusa}{article}{
  author={Cholak, Peter A.},
  author={Downey, Rodney},
  author={Igusa, Grigory},
  title={Any fip real computes a 1-generic},
  note={Available from arXiv: \url {http://arxiv-web3.library.cornell.edu/abs/1502.03785}},
}

\bib{dohi}{book}{
  author={Downey, Rodney G.},
  author={Hirschfeldt, Denis R.},
  title={Algorithmic randomness and complexity},
  series={Theory and Applications of Computability},
  publisher={Springer, New York},
  date={2010},
  pages={xxviii+855},
}

\bib{damu}{article}{
  author={Dzhafarov, Damir D.},
  author={Mummert, Carl},
  title={On the strength of the finite intersection principle},
  journal={Israel J. Math.},
  volume={196},
  date={2013},
  number={1},
  pages={345--361},
}

\bib{damirzoo}{misc}{
  author={Dzhafarov, Damir D.},
  title={Reverse Mathematics Zoo},
  note={\url {http://rmzoo.uconn.edu/}},
}

\bib{withgusto}{article}{
  author={Giusto, Mariagnese},
  author={Simpson, Stephen G.},
  title={Located sets and reverse mathematics},
  journal={J. Symbolic Logic},
  volume={65},
  date={2000},
  number={3},
  pages={1451--1480},
}

\bib{dslice}{book}{
  author={Hirschfeldt, Denis R.},
  title={Slicing the truth},
  series={Lecture Notes Series. Institute for Mathematical Sciences. National University of Singapore},
  volume={28},
  publisher={World Scientific Publishing},
  date={2015},
  pages={xvi+214},
}

\bib{AMT}{article}{
  author={Hirschfeldt, Denis R.},
  author={Shore, Richard A.},
  author={Slaman, Theodore A.},
  title={The atomic model theorem and type omitting},
  journal={Trans. Amer. Math. Soc.},
  volume={361},
  date={2009},
  number={11},
  pages={5805--5837},
}

\bib{klepol}{article}{
  author={Kleene, S. C.},
  author={Post, Emil L.},
  title={The upper semi-lattice of degrees of recursive unsolvability},
  journal={Ann. of Math. (2)},
  volume={59},
  date={1954},
  pages={379--407},
}

\bib{kohlenbach2}{article}{
  author={Kohlenbach, Ulrich},
  title={Higher order reverse mathematics},
  conference={ title={Reverse mathematics 2001}, },
  book={ series={Lect. Notes Log.}, volume={21}, publisher={ASL}, },
  date={2005},
  pages={281--295},
}

\bib{montahue}{article}{
  author={Montalb{\'a}n, Antonio},
  title={Open questions in reverse mathematics},
  journal={Bull. Symbolic Logic},
  volume={17},
  date={2011},
  number={3},
  pages={431--454},
}

\bib{wownelly}{article}{
  author={Nelson, Edward},
  title={Internal set theory: a new approach to nonstandard analysis},
  journal={Bull. Amer. Math. Soc.},
  volume={83},
  date={1977},
  number={6},
  pages={1165--1198},
}

\bib{robinson1}{book}{
  author={Robinson, Abraham},
  title={Non-standard analysis},
  publisher={North-Holland},
  place={Amsterdam},
  date={1966},
  pages={xi+293},
}

\bib{samzoo}{article}{
  author={Sanders, Sam},
  title={Taming the Reverse Mathematics zoo},
  year={2014},
  journal={Submitted, Available from arXiv: \url {http://arxiv.org/abs/1412.2022}},
}

\bib{sambo}{article}{
  author={Sanders, Sam},
  title={The unreasonable effectiveness of Nonstandard Analysis},
  year={2015},
  journal={Submitted, Available from arXiv: \url {http://arxiv.org/abs/1508.07434}},
}

\bib{simpson1}{collection}{
  title={Reverse mathematics 2001},
  series={Lecture Notes in Logic},
  volume={21},
  editor={Simpson, Stephen G.},
  publisher={ASL},
  place={La Jolla, CA},
  date={2005},
  pages={x+401},
}

\bib{simpson2}{book}{
  author={Simpson, Stephen G.},
  title={Subsystems of second order arithmetic},
  series={Perspectives in Logic},
  edition={2},
  publisher={CUP},
  date={2009},
  pages={xvi+444},
}

\bib{zweer}{book}{
  author={Soare, Robert I.},
  title={Recursively enumerable sets and degrees},
  series={Perspectives in Mathematical Logic},
  publisher={Springer},
  date={1987},
  pages={xviii+437},
}

\bib{troeleke1}{book}{
   author={Troelstra, Anne Sjerp},
   author={van Dalen, Dirk},
   title={Constructivism in mathematics. Vol. I},
   series={Studies in Logic and the Foundations of Mathematics},
   volume={121},
   publisher={North-Holland},
   date={1988},
   pages={xx+342+XIV},
}

\bib{yuppie}{article}{
  author={Yu, Xiaokang},
  title={Lebesgue convergence theorems and reverse mathematics},
  journal={Math. Logic Quart.},
  volume={40},
  date={1994},
  number={1},
  pages={1--13},
}

\bib{yussie}{article}{
  author={Yu, Xiaokang},
  author={Simpson, Stephen G.},
  title={Measure theory and weak K\"onig's lemma},
  journal={Arch. Math. Logic},
  volume={30},
  date={1990},
  number={3},
  pages={171--180},
}

\end{biblist}
\end{bibdiv}
\bye